\newtheorem{thm}{Theorem}[section]
\newtheorem{cor}[thm]{Corollary}
\newtheorem{lem}[thm]{Lemma}
\newtheorem{pro}[thm]{Proposition}
\newtheorem{defi}[thm]{Definition}
\title{\textbf{The characteristic polynomial of sunflowers}}
\author{Changjiang Bu\thanks{Corresponding author. buchangjiang@hrbeu.edu.cn}$^{1}$,  Lixiang Chen\thanks{clxmath@163.com}$^{2}$,  Ge Lin\thanks{linge0717@126.com}}
\affil[1]{School of Mathematical Sciences, Harbin Engineering University, Harbin, PR China }
\affil[2]{Center for Combinatorics, LPMC, Nankai University, Tianjin, PR China}
\date{}
\begin{document}

\maketitle

\begin{abstract}
A uniform hypergraph is called a sunflower if all of its hyperedges intersect in the same set of vertices.
In this paper, we determine the eigenvalues and spectral moments of a sunflower, thereby obtaining an explicit formula for its characteristic polynomial.
\end{abstract}

\noindent\textbf{Keywords:} Characteristic polynomial, Spectral moment, Hypergraph, Sunflower

\noindent\emph{AMS classification(2020):} 05C50, 05C65

\section{Introduction}

In spectral hypergraph theory, determining the characteristic polynomial of a hypergraph is a fundamental problem,
which is equivalent to determining all eigenvalues and their multiplicities.
The characteristic polynomial of a uniform hypergraph is the resultant of a system of homogeneous multi-linear equations \cite{qi2005eigenvalues}.
The resultant both determines whether such a system has non-trivial solutions and provides a way to express those solutions explicitly \cite{cox2005using},
which is studied and applied in various fields like algebraic geometry and number theory.
However, computing the resultant (and thus, the characteristic polynomial of a hypergraph) is NP-hard in general \cite{hillar2013most}.

To the best of our knowledge, there are currently three tools for computing the characteristic polynomial of a hypergraph.
The first one is a resultant formula given in the elimination theory \cite{macaulay1902some},
which expresses the resultant as a quotient of two determinants.
Li, Su, and Fallat express the characteristic polynomials of a uniform hypertree in terms of the matching polynomials of its subhypertrees in this way \cite{li2024hypertree}.
The second one is the Poisson formula, which may provide an inductive computational method \cite{jouanolou1991formalisme}.
It is applied to derive the characteristic polynomials of some concrete hypergraphs, including hyperstars \cite{bao2020combinatorial}, hyperpaths \cite{chen2021reduction}, and $3$-uniform complete hypergraphs \cite{zheng2021complete}.
The third tool is the so-called generalized trace or spectral moment,
which is defined by Morozov and Shakirov \cite{morozov2011analogue}.
Clark and Cooper establish a hypergraph analogue of the Harary–Sachs Theorem \cite{clark2021harary}.
Shao, Qi, and Hu \cite{shao2015some} obtain a useful expression for the trace of general tensors.
Using the formula of Shao et al.,
Chen, van Dam, and Bu derive an explicit expression for the characteristic polynomial of power hypergraphs \cite{chen2024spectra}.

A hypergraph is called linear if every pair of hyperedges intersects in at most one vertex.
So far,
most hypergraphs whose characteristic polynomials are explicitly known are linear.
A uniform hypergraph is called a \emph{sunflower} if all of its hyperedges intersect in the same set of vertices, called its seeds.
Let $\mathcal{S}(k,s,p)$ denote a $k$-uniform sunflower with $s$ seeds and $p$ petals.
When $s \geq 2$, the sunflowers $\mathcal{S}(k,s,p)$ form a family of non-linear hypergraphs.

The characteristic polynomial of a sunflower is first studied by Cooper and Dutle,
who provide the characteristic polynomial of a $3$-uniform hyperstar $\mathcal{S}(3,1,p)$ \cite{cooper2015computing}.
Bao, Fan, Wang, and Zhu later generalize the result to $\mathcal{S}(k,1,p)$ for general $k$ \cite{bao2020combinatorial}.
In this paper, we further extend the result to the more general case $\mathcal{S}(k,s,p)$ (see Theorem \ref{mainresult}),
by determining its eigenvalues (see Theorem \ref{dingli1}) and spectral moments (see Theorem \ref{dingli2}).

\section{Preliminaries}

In this section,
we introduce some basic notation and auxiliary lemmas on the eigenvalues and spectral moments of hypergraphs.

A hypergraph $H=(V,E)$ is called $k$-\emph{uniform} if each edge of $H$ contains exactly $k$ vertices.
For a $k$-uniform hypergraph $H$ with $V=[n]:=\{1,\ldots,n\}$,
its \emph{adjacency tensor} $A_{H}=(a_{i_{1}i_{2}\cdots i_{k}})$ is a $k$-order $n$-dimensional tensor \cite{cooper2012spectra}, where
\begin{equation*}
a_{i_{1}i_{2}\cdots i_{k}}=\begin{cases}
\frac{1}{(k-1)!}&\text{if $\{i_{1},i_{2},\ldots,i_{k}\}\in E$},\\
0&\text{otherwise}.
\end{cases}
\end{equation*}
For a vector $\mathbf{x}=(x_{1},\ldots,x_{n})^{\top}\in\mathbb{C}^{n}$ and $S \subseteq V$,
let $\mathbf{x}^{S}=\prod_{s \in S}x_{s}$.
In particular,
we set $\mathbf{x}^{\emptyset}=1$.
Let $E_{H}(v)$ denote the set of hyperedges of $H$ containing the vertex $v$.
For some $\lambda \in \mathbb{C}$,
if there exists a non-zero vector $\mathbf{x}$ such that for each $i \in [n]$,
\begin{align*}
\lambda x_{i}^{k-1}=\sum_{i_{2},\ldots,i_{k}=1}^{n}a_{i i_{2} \cdots i_{k}}x_{i_{2}}\cdots x_{i_{k}},
\end{align*}
or equivalently, for each $v\in V$,
\begin{align}\label{eq2}
\lambda x_{v}^{k-1}=\sum_{e \in E_{H}(v)}\mathbf{x}^{e\setminus\{v\}},
\end{align}
then $\lambda$ is called an \emph{eigenvalue} of $H$ and $\mathbf{x}$ is an \emph{eigenvector} of $H$ corresponding to $\lambda$ (\cite{lim2005singular,qi2005eigenvalues}).
The \emph{characteristic polynomial} $\phi_{H}(\lambda)$ of $H$ is defined as the resultant of the polynomial system
$\{ \lambda x_{v}^{k-1}-\sum_{e \in E_{H}(v)}\mathbf{x}^{e\setminus\{v\}} : v \in V \}$ \cite{qi2005eigenvalues}.

The sum of $d$-th powers of all eigenvalues of a uniform hypergraph $H$ is called the $d$-th order spectral moment of $H$,
denoted by $\mathrm{S}_{d}(H)$.
It is known that the $d$-th order spectral moment of a graph is equal to the $d$-th order trace of the adjacency matrix \cite{cvetkovic1980spectra}.
Similar to the case of graphs,
the $d$-th order spectral moment $\mathrm{S}_{d}(H)$ is equal to the $d$-th order trace of the adjacency tensor $A_{H}$ \cite{hu2013determinants}.

In order to describe the formula for the spectral moments of a hypergraph (see Lemma \ref{yinli1}),
we introduce some related notation.
For a $k$-uniform hypergraph $H$ with $V=[n]$ and a $k$-tuple $i v_1v_2\cdots v_{k-1}\in [n]^k$,
we interpret it as a rooted hyperedge of $H$ with root $i$ if $\{i, v_1,v_2,\ldots,v_{k-1}\} \in E$.
When no confusion arises,
we use $i v_1v_2\cdots v_{k-1}$ to represent the hyperedge $\{i, v_1,v_2,\ldots,v_{k-1}\}$ rooted at $i$.
Let $\mathcal{F}_{d}=\{ (i_{1}\alpha_{1},\ldots,i_{d}\alpha_{d}): 1 \leq i_{1} \leq \cdots\leq i_{d} \leq n, \alpha_{1},\ldots,\alpha_{d}\in[n]^{k-1}, i_j\alpha_j \in E \}$.
Let $f=(i_{1}\alpha_{1},\ldots,i_{d}\alpha_{d})\in\mathcal{F}_{d}$,
where $i_{j}\alpha_{j}\in[n]^{k}$ and $j=1,\ldots,d$.
It implies that $f$ consists of $d$ rooted hyperedges.
Construct a $k$-uniform hypergraph $H_{f}$ such that $V(H_{f})=\bigcup_{j=1}^{d}i_j\alpha_j$ and $E(H_{f})=\bigcup_{j=1}^{d}\{i_j\alpha_j\}$.
Obviously, $H_{f}$ is a subhypergraph of $H$.
For $i_{j}\alpha_{j}=i_{j}v_{1}\cdots v_{k-1}$,
let $\theta(i_{j}\alpha_{j})=\{(i_{j},v_{1}),\ldots,(i_{j},v_{k-1})\}$ be the set of arcs from $i_{j}$ to $v_{1},\ldots,v_{k-1}$, where $(v_{1},v_{2})$ denotes an arc from vertex $v_{1}$ to vertex $v_{2}$.
Construct a multi-digraph $D_{f}$ such that $V(D_{f})=\bigcup_{j=1}^{d}i_j\alpha_j$ and $E(D_f)=\bigcup_{j=1}^{d}\theta(i_{j}\alpha_{j})$.

Consider next a set of representatives of isomorphic such connected subhypergraphs
\begin{align*}
\mathcal{H}_{d}=\{ \widehat{H}: \mbox{$H_{f}\cong\widehat{H}$ and $D_f$ is Eulerian for some $f\in\mathcal{F}_{d}$} \}.
\end{align*}
For $\widehat{H}\in\mathcal{H}_{d}$,
let $\mathcal{F}_{d}(\widehat{H})=\{f \in \mathcal{F}_{d} :H_{f}\cong\widehat{H}\}$.
Denote the number of subhypergraphs of $H$ which are isomorphic to $\widehat{H}$ by $N_{H}(\widehat{H})$.
Let the set of representatives of isomorphic Eulerian multi-digraphs
\[\mathfrak{D}_{d}(\widehat{H})=\{D:\mbox{$D_{f}\cong D$ is  Eulerian for some $f\in\mathcal{F}_{d}(\widehat{H})$}\}.\]

\begin{lem}\cite{chen2024spectra}\label{yinli1}
The $d$-th order spectral moment of a $k$-uniform hypergraph $H$ is
\begin{align*}
\mathrm{S}_{d}(H)=(k-1)^{n-1}\sum_{\widehat{H}\in\mathcal{H}_{d}}c_{d}(\widehat{H})N_{H}(\widehat{H}).
\end{align*}
The $d$-th order spectral moment coefficient of the subhypergraph $\widehat{H}$ is
\begin{align*}
c_{d}(\widehat{H})=d(k-1)((k-1)!)^{-d}\sum_{D\in\mathfrak{D}_{d}(\widehat{H})}\frac{|\{f:\mbox{$f\in\mathcal{F}_{d}(\widehat{H})$ and $D_{f}\cong D$}\}|t(D)}{\prod_{v\in V(D)}\mathrm{deg}^{+}(v)},
\end{align*}
where $\mathrm{deg}^{+}(v)$ is the out-degree of $v \in V(D)$ and $t(D)$ is the number of spanning trees of $D$.
\end{lem}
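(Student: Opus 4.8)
The plan is to begin from the identity $\mathrm{S}_d(H)=\mathrm{Tr}_d(A_H)$ recorded above and to expand the $d$-th order trace by means of the combinatorial trace formula of Shao, Qi and Hu \cite{shao2015some}. In the relevant form this writes $\mathrm{Tr}_d(A_H)$ as $(k-1)^{n-1}$ times a sum over all $d$-tuples $f=(i_1\alpha_1,\dots,i_d\alpha_d)\in\mathcal{F}_d$ of rooted hyperedges, weighted by the product of the corresponding adjacency entries together with a purely digraph-theoretic factor attached to $D_f$. Two structural facts cut the sum down to the index set $\mathcal{H}_d$: first, a tuple whose arc-digraph $D_f$ is not balanced (some vertex entered more often than it is left) contributes zero, so only Eulerian $D_f$ survive; second, because a trace is a power sum of eigenvalues it detects only connected configurations, so the surviving $D_f$ are connected Eulerian multi-digraphs and each underlying $H_f$ is one of the connected subhypergraphs collected in $\mathcal{H}_d$. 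Each of the $d$ rooted hyperedges contributes the adjacency value $1/(k-1)!$, producing the global scalar $((k-1)!)^{-d}$.

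The heart of the argument is to put the digraph factor into closed form. For a fixed Eulerian $D_f$ the trace formula counts, up to the arc labellings forced by the tensor expansion, the closed Eulerian walks through $D_f$; by the BEST theorem the number $\mathrm{ec}(D)$ of Eulerian circuits of a connected balanced digraph $D$ equals $t(D)\prod_{v\in V(D)}(\mathrm{deg}^{+}(v)-1)!$, where $t(D)$ is the number of spanning arborescences, a quantity independent of the chosen root for balanced $D$. Writing $(\mathrm{deg}^{+}(v)-1)!=\mathrm{deg}^{+}(v)!/\mathrm{deg}^{+}(v)$ and dividing out the $\prod_{v}\mathrm{deg}^{+}(v)!$ orderings of the out-arcs at each vertex that the expansion overcounts, the circuit count collapses to $t(D)\big/\prod_{v\in V(D)}\mathrm{deg}^{+}(v)$. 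The remaining scalar $d(k-1)$ is simply the number of arcs of $D_f$, which enters because the walk may be started at any of its $d(k-1)$ arc positions.

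It then remains to reorganise the sum into the stated double indexing. First I would group the tuples $f$ by the isomorphism type $\widehat H$ of the subhypergraph $H_f$; isomorphic $H_f$ carry identical weights, so the sum over $f$ becomes a sum over $\widehat H\in\mathcal{H}_d$ with multiplicity $N_H(\widehat H)$, the number of copies of $\widehat H$ inside $H$. Within a fixed class $\mathcal{F}_d(\widehat H)$ I would partition further by the isomorphism type $D$ of the arc-digraph, giving the inner sum over $D\in\mathfrak{D}_d(\widehat H)$ with multiplicity $|\{f\in\mathcal{F}_d(\widehat H):D_f\cong D\}|$. Assembling the scalars $(k-1)^{n-1}$, $d(k-1)$, $((k-1)!)^{-d}$ and the ratio $t(D)/\prod_{v}\mathrm{deg}^{+}(v)$ collected above reproduces exactly the formula for $c_d(\widehat H)$ and hence for $\mathrm{S}_d(H)$.

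I expect the main obstacle to be the exact bookkeeping in the second step: one must check that the intrinsic weight in the Shao--Qi--Hu formula reduces through the BEST theorem to $t(D)/\prod_{v}\mathrm{deg}^{+}(v)$ with no stray factor, keeping careful track of the automorphisms of $\widehat H$ and of $D$ (which feed simultaneously into $N_H(\widehat H)$ and into the multiplicities $|\{f:\dots\}|$) and of the orderings and rootings built into the definition of $\mathcal{F}_d$, so that nothing is double-counted or dropped. A secondary subtlety worth isolating is that one isomorphism type $\widehat H$ can support several non-isomorphic Eulerian digraphs $D$, which is precisely why the inner sum over $\mathfrak{D}_d(\widehat H)$ cannot be reduced to a single term.
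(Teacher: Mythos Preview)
The paper does not prove this lemma at all: it is quoted verbatim from \cite{chen2024spectra} and used as a black box, so there is no ``paper's own proof'' to compare against. Your sketch is aimed at the argument of \cite{chen2024spectra} itself, and in outline it is the right one---start from $\mathrm{S}_d(H)=\mathrm{Tr}_d(A_H)$, expand via the Shao--Qi--Hu trace formula, and simplify the digraph weight with the BEST theorem.

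One small correction to your reasoning: the restriction to connected $H_f$ is not because ``a trace is a power sum of eigenvalues''---disconnected configurations do appear in trace expansions in general. The point is rather that the set $\mathcal{H}_d$ is defined through Eulerian $D_f$, and an Eulerian multi-digraph is connected by definition; equivalently, a balanced but disconnected $D_f$ has $t(D_f)=0$, so the displayed formula for $c_d(\widehat H)$ already kills any such contribution. Apart from that, your own caveat is the real issue: matching the intrinsic weight in the Shao--Qi--Hu formula to $t(D)/\prod_v\mathrm{deg}^+(v)$, while tracking the orderings built into $\mathcal{F}_d$, the multiset structure of rooted hyperedges sharing a root, and the automorphisms absorbed by $N_H(\widehat H)$ versus those absorbed by $|\{f:D_f\cong D\}|$, is where all the work lies, and your proposal acknowledges but does not carry out that bookkeeping.
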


\section{Spectra of a sunflower}

\subsection{The eigenvalues of a sunflower}

For a sunflower $\mathcal{S}=\mathcal{S}(k,s,p)$,
denote by $S$ the set of seed vertices, and let $P_{i}$ be the set of vertices on the $i$-th petal of $\mathcal{S}$ for each $i\in[p]$.
Let $(\lambda,\mathbf{x})$ be an eigenpair of $\mathcal{S}$.
From \eqref{eq2}, for each $v \in S$, we have that
\begin{align}\label{shi1}
\lambda x_{v}^{k-1}=\sum_{i=1}^{p}\mathbf{x}^{S\setminus\{v\}}\mathbf{x}^{P_{i}},
\end{align}
and for each $i\in[p]$ and each $u\in P_{i}$, we have that
\begin{align}\label{shi2}
\lambda x_{u}^{k-1}=\mathbf{x}^{S}\mathbf{x}^{P_{i}\setminus\{u\}}.
\end{align}
For $k=2$,
it is known that the star graph $\mathcal{S}(2,1,p)$ have the spectrum
$$\{[\sqrt{p}]^{1}, [-\sqrt{p}]^{1}, [0]^{p-1}\}$$ (exponents indicate multiplicity) \cite{cvetkovic1980spectra}.
For $k\geq3$,
we first determine all the eigenvalues of $\mathcal{S}(k,s,p)$ as follows.

\begin{thm}\label{dingli1}

For $k\geq3$,
let $\mathcal{S}=\mathcal{S}(k,s,p)$ be a $k$-uniform sunflower with $s$ seeds and $p$ petals.
Denote $\Xi^{p}=\{(\xi_{i})\in\mathbb{C}^{p}:\mbox{$\xi_{i}^{s+1}-\xi_{i}=0$ for all $i\in[p]$} \}$ and $\Xi^{p}_{0}=\{\xi\in \Xi^{p}: \mbox{$|\mathrm{supp}(\xi)|=0$ or $p$} \}$.
Let $\mathbf{e}_{p}$ denote the $p$-dimensional all-ones column vector.
Then the complex number $\lambda$ is an eigenvalue of $\mathcal{S}$ if and only if
\begin{enumerate}
\renewcommand{\labelenumi}{(\alph{enumi})}

\item  $\prod_{\xi\in\Xi^{p}}(\lambda^{k}-(\mathbf{e}_{p}^{\top}\xi)^{s})=0$, when $s<k-1$;

\item $\prod_{\xi\in\Xi^{p}_{0}}(\lambda^{k}-(\mathbf{e}_{p}^{\top}\xi)^{k-1})=0$, when $s=k-1$.

\end{enumerate}
\end{thm}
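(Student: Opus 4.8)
The plan is to work directly with the eigenvalue system \eqref{shi1}--\eqref{shi2}. Writing $a=\mathbf{x}^{S}$, $b_{i}=\mathbf{x}^{P_{i}}$ and $B=\sum_{i=1}^{p}b_{i}$, I would first multiply \eqref{shi1} by $x_{v}$ and \eqref{shi2} by $x_{u}$ to obtain the cleaner consequences $\lambda x_{v}^{k}=aB$ for every seed $v\in S$ and $\lambda x_{u}^{k}=ab_{i}$ for every $u\in P_{i}$. Because the system is homogeneous of degree $k-1$, its eigenvalues are invariant under scaling $\mathbf{x}\mapsto t\mathbf{x}$, which I will use later to normalize $a$. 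Throughout, the argument splits on $\lambda=0$ versus $\lambda\neq0$.

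For the forward direction with $\lambda\neq0$, the first step is to show $a\neq0$: if $a=0$, then $\lambda x_{u}^{k}=ab_{i}=0$ forces every petal coordinate to vanish, hence each $b_{i}=0$ and $B=0$, and then $\lambda x_{v}^{k}=aB=0$ kills every seed coordinate, contradicting $\mathbf{x}\neq\mathbf{0}$. Assuming $a\neq0$, the relation $\lambda x_{u}^{k}=ab_{i}$ shows that all $u\in P_{i}$ share the common $k$-th power $ab_{i}/\lambda$; hence each petal is either entirely zero ($b_{i}=0$) or entirely nonzero ($b_{i}\neq0$). For an active petal, taking the product over its $k-s$ vertices gives $b_{i}^{k}=(ab_{i}/\lambda)^{k-s}$, i.e. $b_{i}^{s}=(a/\lambda)^{k-s}$; fixing an $s$-th root $\beta$ of $(a/\lambda)^{k-s}$, I may write $b_{i}=\zeta_{i}\beta$ with $\zeta_{i}^{s}=1$. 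Doing the same over the $s$ seeds yields $a^{k}=(aB/\lambda)^{s}$, that is $a^{k-s}\lambda^{s}=B^{s}$. Substituting $B=\beta\,\Sigma$ with $\Sigma=\sum_{i\,\mathrm{active}}\zeta_{i}$ and cancelling $a^{k-s}$ collapses this to $\lambda^{k}=\Sigma^{s}$. Setting $\xi_{i}=\zeta_{i}$ on active petals and $\xi_{i}=0$ elsewhere gives $\xi\in\Xi^{p}$ with $\lambda^{k}=(\mathbf{e}_{p}^{\top}\xi)^{s}$, and the support of $\xi$ is exactly the set of active petals.

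The case distinction between (a) and (b) then comes down to which supports can occur. When $s<k-1$ each petal has $k-s\geq2$ vertices, so a petal may be switched off by zeroing all its vertices while keeping $a\neq0$; every support is attainable, so $\xi$ ranges over all of $\Xi^{p}$. When $s=k-1$ each petal is a single vertex $u_{i}$ and \eqref{shi2} reads $\lambda x_{u_{i}}^{k-1}=a$; since $a\neq0$, no petal can be switched off, forcing full support. Together with $\lambda=0$, which corresponds to $\xi=\mathbf{0}$, the admissible $\xi$ are precisely those with $|\mathrm{supp}(\xi)|\in\{0,p\}$, i.e. $\xi\in\Xi^{p}_{0}$ (and here $s=k-1$ makes the exponents $s$ and $k-1$ agree). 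For the converse I would realize every root of the stated products as an eigenvalue: normalize $a=1$, set the coordinates of the inactive petals to zero, and recover the seed and active-petal coordinates by extracting $k$-th roots of $\tau=B/\lambda$ and $\sigma_{i}=b_{i}/\lambda$. The identities $\tau^{s}=a^{k}$ and $\sigma_{i}^{k-s}=b_{i}^{k}$, both immediate from the relations above, guarantee that these roots can be chosen coherently so that $\prod_{v\in S}x_{v}=a$ and $\prod_{u\in P_{i}}x_{u}=b_{i}$. Finally $\lambda=0$ is handled by an explicit eigenvector (one seed set to $0$ and the petal entries chosen to sum to $0$).

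The main obstacle is the combinatorial bookkeeping of the zero patterns rather than any single hard computation. Two points need care: first, in the converse direction one must select the $k$-th roots simultaneously over all vertices of each petal and over the seeds so that their products reproduce the prescribed $b_{i}$ and $a$; this compatibility is exactly what $\sigma_{i}^{k-s}=b_{i}^{k}$ and $\tau^{s}=a^{k}$ provide. Second, the jump from $\Xi^{p}$ to $\Xi^{p}_{0}$ must be pinned to the structural fact that a petal of size $k-s$ can be zeroed while keeping $a\neq0$ precisely when $k-s\geq2$, and never when $k-s=1$; getting this dichotomy exactly right is the crux of separating cases (a) and (b).
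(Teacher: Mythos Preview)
Your proposal is correct and follows essentially the same route as the paper: both arguments separate off $\lambda=0$, show $\mathbf{x}^{S}\neq 0$ when $\lambda\neq 0$, take products of the equations \eqref{shi1}--\eqref{shi2} over seed and petal vertices to obtain the relations $b_{i}^{s}=(a/\lambda)^{k-s}$ and $\lambda^{k}=(\mathbf{e}_{p}^{\top}\xi)^{s}$, and then construct an explicit eigenvector for the converse. The only cosmetic differences are your cleaner $a,b_{i},B$ notation and your more abstract ``choose compatible $k$-th roots'' description of the eigenvector, where the paper writes down a concrete formula; the distinction between $\Xi^{p}$ and $\Xi^{p}_{0}$ is handled identically in both, via the observation that a petal can be zeroed without killing $\mathbf{x}^{S}$ iff $k-s\geq 2$.
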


\begin{proof}
First of all,
it is known that a $k$-uniform hypergraph always has an eigenvalue $0$ for $k\geq3$ \cite{qi2014h}.
On the other hand,
$\lambda=0$ is a solution of both $\prod_{\xi\in\Xi^{p}}(\lambda^{k}-(\mathbf{e}_{p}^{\top}\xi)^{s})=0$ and $\prod_{\xi\in\Xi_{0}^{p}}(\lambda^{k}-(\mathbf{e}_{p}^{\top}\xi)^{k-1})=0$.
Thus, for the remainder of the proof, we only need to consider the case $\lambda\neq0$.

Let $(\lambda,\mathbf{x})$ be an eigenpair of $\mathcal{S}$ with $\lambda\neq0$.
Using \eqref{shi2}, for any $i\in [p]$, we have that
\begin{align*}
    \lambda^{k-s}(\mathbf{x}^{P_{i}})^{k-1}&=\prod_{u \in P_i}\lambda x_{u}^{k-1}\\
    &=\prod_{u \in P_i}\mathbf{x}^{S}\mathbf{x}^{P_{i}\setminus\{u\}}\\
    &=(\mathbf{x}^{S})^{k-s}(\mathbf{x}^{P_{i}})^{k-s-1},
\end{align*}
that is, $(\mathbf{x}^{P_{i}})^{k-s-1}\left(\lambda^{k-s}(\mathbf{x}^{P_{i}})^{s}-(\mathbf{x}^{S})^{k-s}\right)=0$.
We observe that $\mathbf{x}^{S} \neq 0$;
otherwise, from \eqref{shi2} and $\lambda \neq 0$, it would follow that $\mathbf{x} = \mathbf{0}$,
which is not the case.
Hence, we have that
\begin{align*}
(\mathbf{x}^{S}\mathbf{x}^{P_{i}})^{k-1-s}\left(\lambda^{k-s}(\mathbf{x}^{S}\mathbf{x}^{P_{i}})^{s}-(\mathbf{x}^{S})^{k}\right)=0. 
\end{align*}
Let $\beta \in \mathbb{C}$ be such that $\lambda^{k-s}\beta^{s}-(\mathbf{x}^{S})^{k}=0$. Then we have that $\mathbf{x}^{S}\mathbf{x}^{P_{i}}=\beta \xi_i$ for some $\xi_i$ satisfying
\begin{align}\label{eq1}
    \begin{cases}
        \xi_{i}^{s+1}-\xi_{i}=0, & k-1-s >0,\\
        \xi_{i}^{s}-1=0, &k-1-s=0.
    \end{cases}
\end{align}
So, using \eqref{shi1}, we get $\lambda x_v^k=\sum_{i=1}^{p}\mathbf{x}^{S}\mathbf{x}^{P_{i}}$ for every $v\in S$.
Using $\mathbf{x}^{S}\mathbf{x}^{P_{i}}=\beta \xi_i$ and $\beta^{s}=\frac{(\mathbf{x}^{S})^{k}}{\lambda^{k-s}}$,
we have
\begin{align*}
\lambda^s(\mathbf{x}^{S})^k&=\prod_{v \in S}\lambda x_v^k=\prod_{v \in S}\sum_{i=1}^{p}\mathbf{x}^{S}\mathbf{x}^{P_{i}}=\beta^s( \mathbf{e}_{p}^{\top}\xi)^s=\frac{(\mathbf{x}^{S})^k(\mathbf{e}_{p}^{\top}\xi)^s}{\lambda^{k-s}},
\end{align*}
that is,
\begin{align*}
\lambda^{k}-(\mathbf{e}_{p}^{\top}\xi)^{s}=0,
\end{align*}
for some $\xi =(\xi_i) \in\mathbb{C}^{p}$ satisfying \eqref{eq1}.

Next, we prove the following implication: If $\lambda \in \mathbb{C}$ satisfies $\lambda^{k}-(\mathbf{e}_{p}^{\top}\xi)^{s}=0$ for some $\xi = (\xi_i) \in\Xi^{p}$ ($\xi\in\Xi_{0}^{p}$, if $s=k-1$), then $\lambda$ is an eigenvalue of $\mathcal{S}$.
We may assume that $\mathbf{e}_{p}^{\top}\xi \neq 0$, since $0$ is trivially an eigenvalue of $\mathcal{S}$.
Let $\mu \in\mathbb{C}$ be such that $\mu^{k}=\mathbf{e}_{p}^{\top}\xi =\sum_{i=1}^p \xi_i$, and let $\gamma_i \in\mathbb{C}$ be such that $\gamma_i^k =\xi_i$ for every $i \in [p]$.
We fix $u_{i}$ as one of the vertices in $P_{i}$. Note that $P_{i}\setminus\{u_{i}\}$ is empty if $s=k-1$. Using these, we can construct an eigenvector corresponding to $\lambda$ as follows. Let $\mathbf{x}$ be a vector with entries
\begin{equation*}
x_{v}=\begin{cases}
1, &\text{for $v\in S$},\\
\frac{\lambda \gamma_i^{s+1}}{\mu^{s+1}}, &\text{for $v=u_{i}\in P_{i}$ and $i\in[p]$},\\
\frac{\gamma_i}{\mu}, &\text{for $v\in P_{i}\setminus\{u_{i}\}$ and $i\in[p]$}.
\end{cases}
\end{equation*}
We now verify that the pair $(\lambda,\mathbf{x})$ satisfies \eqref{shi1} and \eqref{shi2}, which will complete the proof.

For each $v\in S$, we have that
\begin{align*}
\lambda x_{v}^{k-1}=\sum_{i=1}^{p}\frac{\lambda \xi_i}{\sum_{i=1}^p \xi_i}=
\sum_{i=1}^{p}\frac{\lambda \gamma_i^k}{\mu^k}=\sum_{i=1}^{p}\frac{\lambda \gamma_i^{s+1}}{\mu^{s+1}}\left(\frac{\gamma_i}{\mu}\right)^{k-s-1}=\sum_{i=1}^p\mathbf{x}^{S\setminus\{v\}}\mathbf{x}^{P_{i}},
\end{align*}
which shows  \eqref{shi1}.

For each $i\in[p]$, using $\gamma_i^{k(s+1)}=\gamma_i^k$ (whenever $\xi_i^{s+1}=\xi_i$ or $\xi_i^{s}=1$) and $\mu^{sk}=\lambda^k$, we have that
\begin{align*}
\lambda x_{u_{i}}^{k-1}=\lambda\left(\frac{\lambda\gamma_i^{s+1}}{\mu^{s+1}}\right)^{k-1}= \left(\frac{\gamma_i}{\mu}\right)^{k-s-1}=\mathbf{x}^{S}\mathbf{x}^{P_{i}\setminus\{u_{i}\}},
\end{align*}
and for each $u\in P_{i}\setminus\{u_{i}\}$ (if $s < k-1$),
we have that
\begin{align*}
\lambda x_{u}^{k-1}
=\lambda\left(\frac{\gamma_i}{\mu}\right)^{k-1}=\frac{\lambda\gamma_i^{s+1}}{\mu^{s+1}}\left(\frac{\gamma_i}{\mu}\right)^{k-s-2}=\mathbf{x}^{S}\mathbf{x}^{P_{i}\setminus\{u\}}.
\end{align*} These two equalities together verify \eqref{shi2}.
\end{proof}

\subsection{The spectral moments of a sunflower}

The spectrum of a hypergraph is said to be $k$-symmetric if it is invariant under a rotation of an angle $\frac{2\pi}{k}$ in the complex plane \cite{cooper2012spectra}.
Note that every hyperedge of a sunflower $\mathcal{S}=\mathcal{S}(k,s,p)$ contains a vertex of degree one,
then $\mathcal{S}$ is the so-called cored hypergraph \cite{hu2013cored}.
It is known that the spectrum of a $k$-uniform cored hypergraph is $k$-symmetric \cite{shao2015some},
hence the $d$-th order spectral moment $\mathrm{S}_{d}(\mathcal{S})=0$ for $k\nmid d$.
We restrict our attention to the case $k \mid d$.

We will use Lemma \ref{yinli1}, the formula for the spectral moment for general hypergraphs,
to give an expression for $\mathrm{S}_{d}(\mathcal{S})$,
and consequently the characteristic polynomial of $\mathcal{S}$.
The key step is to give an explicit characterization of the multi-digraph $D_f$ involved in the formula,  which determines the family of sub-hypergraphs $\mathcal{H}_d$ and the spectral moment coefficient $c_{d}(\widehat{H})$ for $\widehat{H} \in \mathcal{H}_d$.

For two vertex sets $V$ and $U$, we write $V \xrightarrow{m} U$ to denote that there is an arc of multiplicity $m$ from each vertex $v \in V$ to every vertex in $U \setminus\{v\}$. Here, $m$ is a nonnegative integer; in particular, we define $m=0$ to indicate the absence of such arcs.

\begin{lem}\label{yinli3.2}
For a sunflower $\mathcal{S}=\mathcal{S}(k,s,p)$,
let $S$ be the set of seed vertices, and let $P_{i}$ be the set of vertices on the $i$-th petal of $\mathcal{S}$ for all $i\in[p]$.
Let $d$ be a positive integer such that $k \mid d$ and $f \in \mathcal{F}_d(\mathcal{S})$.
Then

\begin{enumerate}
\renewcommand{\labelenumi}{(\alph{enumi})}
\item Let $\mathrm{deg}^+(v)$ and $\mathrm{deg}^-(v)$ be the out-degree and in-degree of the vertex $v$ in the multi-digraph $D_f$. Then $\mathrm{deg}^+(v) =\mathrm{deg}^-(v)$  for all $v \in V(D_f)$.

\item  For each $v \in S$ and each $i \in [p]$,  there exists a non-negative integer $q_{vi}$ such that $\{v\} \xrightarrow{q_{vi}}  P_{i}$ in $D_f$.

\item For each $i \in [p]$,  there exists a positive integer $m_i$ such that $P_{i} \xrightarrow{m_i} S \cup P_{i}$ in $D_f$.

\item When $s \geq 2$, then $S \xrightarrow{\frac{d}{k}} S$ in $D_f$ .

\end{enumerate}
\end{lem}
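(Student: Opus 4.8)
The plan is to recast the whole statement as a bookkeeping problem about how often each vertex of $\mathcal{S}$ serves as a root in $f$, and then to read the arc multiplicities of $D_f$ directly off the sunflower structure. Write each of the $d$ rooted hyperedges of $f$ as ``the edge $S\cup P_i$ rooted at some $w\in S\cup P_i$''. For $v\in S$ and $i\in[p]$ let $a_{vi}$ be the number of rooted hyperedges of $f$ equal to $S\cup P_i$ rooted at $v$, and for $u\in P_i$ let $b_u$ be the number of those rooted at $u$. Since every hyperedge of $\mathcal{S}$ contains all of $S$ together with one full petal, a rooted edge rooted at $v\in S$ with petal $i$ contributes one arc from $v$ to each vertex of $(S\setminus\{v\})\cup P_i$, while one rooted at $u\in P_i$ contributes one arc from $u$ to each vertex of $S\cup(P_i\setminus\{u\})$. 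Collecting these, the multiplicity of an arc of $D_f$ depends only on its source and the petal involved: the seed-to-seed arc $v\to v'$ has multiplicity $r_v:=\sum_i a_{vi}$, the arc $v\to u$ with $u\in P_i$ has multiplicity $a_{vi}$, and both arcs $u\to v$ and $u\to u'$ out of $u\in P_i$ have multiplicity $b_u$; there are no arcs between distinct petals.

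For part (a) I would simply use that $D_f$ is Eulerian, which is the only case relevant to Lemma \ref{yinli1}: an Eulerian multi-digraph satisfies $\mathrm{deg}^+(v)=\mathrm{deg}^-(v)$ at every vertex. Part (b) then requires no balancing at all and is read straight off the bookkeeping above: $v$ sends exactly $a_{vi}$ arcs to each vertex of $P_i$, so $\{v\}\xrightarrow{q_{vi}}P_i$ holds with $q_{vi}=a_{vi}\geq0$.

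For parts (c) and (d) I would feed the balance from (a) into this accounting. At a petal vertex $u\in P_i$ one has $\mathrm{deg}^+(u)=(k-1)b_u$ and $\mathrm{deg}^-(u)=A_i+B_i-b_u$, where $A_i=\sum_{v\in S}a_{vi}$ and $B_i=\sum_{u'\in P_i}b_{u'}$; balance gives $k\,b_u=A_i+B_i$, so $b_u=(A_i+B_i)/k$ is the same for every $u\in P_i$. Setting $m_i=(A_i+B_i)/k$ yields $P_i\xrightarrow{m_i}S\cup P_i$, and whenever petal $i$ appears in $H_f$ we have $A_i+B_i\geq1$, forcing the integer $m_i$ to be at least $1$, which gives $m_i>0$ and proves (c). Likewise at a seed $v$ (always present, since every edge meets $S$) one has $\mathrm{deg}^+(v)=(k-1)r_v$ and $\mathrm{deg}^-(v)=(R-r_v)+B$, where $R=\sum_{v'}r_{v'}$ and $B=\sum_u b_u$; balance gives $k\,r_v=R+B$. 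The decisive observation is that $R+B$ counts every rooted hyperedge of $f$ exactly once, according to whether it is rooted in $S$ (contributing to $R$) or in a petal (contributing to $B$), so $R+B=d$. Hence $r_v=d/k$ for every seed $v$, and for $s\geq2$ this is precisely $S\xrightarrow{d/k}S$, proving (d).

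The main obstacle is the purely combinatorial accounting rather than any deep idea: one must verify the ``uniform multiplicity'' phenomenon — that an arc's multiplicity depends only on its source and the petal, which is exactly what makes the $\xrightarrow{m}$ notation meaningful — and then tally in- and out-degrees at each of the two vertex types correctly before invoking balance. The clean value $d/k$ in (d) rests on the identity $R+B=d$, and it is here that the hypothesis $k\mid d$ enters, guaranteeing that $d/k$ is a genuine (integer) arc multiplicity.
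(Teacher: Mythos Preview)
Your proposal is correct and follows essentially the same approach as the paper: introduce the root-counts (your $a_{vi},b_u,r_v$ are the paper's $q_{vi},m(u),m(v)$), read the arc multiplicities off the sunflower structure, and then impose the Eulerian balance $\mathrm{deg}^+=\mathrm{deg}^-$ at petal and seed vertices to force the constancy in (c) and (d). The only cosmetic difference is that the paper obtains $m(v)=d/k$ for seeds by observing directly that $v$ lies in all $d$ rooted edges, so $\mathrm{deg}^-(v)=d-m(v)$, whereas you arrive at the same thing via $R+B=d$; these are the same count.
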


\begin{proof}

Note that the multi-digraph $D_f$ is Eulerian, it implies (a).

For each $i \in [p]$, let $e_i =S \cup P_i \in E(\mathcal{S})$.
For each $v \in S$, denote by $q_{vi}$ the number of $k$-tuples $v\alpha$ in $f$ such that the first entry is $v$ and $v\alpha=e_i$.
From the construction of $D_{f}$,  we have (b).

For each $i \in [p]$ and $u \in P_{i}$,
note that the $k$-tuples $u\alpha$ in $f$ with initial entry $u$ are exactly the hyperedge $e_i$ rooted at $u$, that is, $u\alpha=e_i$.
Let $m(v)$ denote the number of such $k$-tuples in $f$ with $v$ as their first entry for any $v \in V(D_{f})$.
For any $u \in P_{i}$,  we have $\mathrm{deg}^+(u)=(k-1)m(u)$
and $\mathrm{deg}^-(u)=\sum_{v\in P_{i}\setminus\{u\}}m(v)+\sum_{v\in S}q_{vi}$
(or $\mathrm{deg}^-(u)=\sum_{v\in S}q_{vi}$ if $s=k-1$).
Since $\mathrm{deg}^+(u)>0$, it follows that $m(u) >0$.
By (a),
we get
\begin{align}\label{eqpi}
km(u)=\sum_{v\in P_{i}}m(v)+\sum_{v\in S}q_{vi},
\end{align}
where the right-hand side is independent of $u$. It implies that $m(u)$ is constant for all $u \in P_{i}$.
For each $i \in [p]$ and all $u \in P_{i}$, we set $m(u)=m_{i}$.
From the construction of $D_{f}$, we have (c).

For each $v \in S$, observe that $v$ appears in every rooted hyperedge in $f$.
Note that $m(v)$ counts the number of such hyperedges rooted at $v$,
and thus $d-m(v)$ counts the number of such hyperedges not rooted at $v$.
Then we have $\mathrm{deg}^+(v)=(k-1)m(v)$ and $\mathrm{deg}^-(v)=d-m(v)$.
By (a),
we get $m(v)=\frac{d}{k}$ for each $v \in S$.
When $s \geq 2$, for any two distinct vertices $v_1,v_2 \in S$, the multiplicity of arc in $D_f$ from $v_1$ to $v_2$ is $m(v_1)=\frac{d}{k}$, it implies (d).
\end{proof}

For positive integers $m$ and $t$, let $mK_t$ denote the complete multi-digraph on $t$ vertices, where each arc has multiplicity exactly $m$.
In particular, for any $m$, $mK_1$ is defined as a single vertex.

\begin{defi}\label{defi3.3}
Let $\mathbf{m}=(m_i)$ be a $p$-dimensional positive integer column vector and $Q=(q_{vi})$ be an $s \times p$ non-negative integer matrix.
Let $d$ be a positive integer such that $k \mid d$.
The multi-digraph $D(\mathbf{m},Q)$ is constructed as follows:
\begin{itemize}
\item The vertex set of $D(\mathbf{m},Q)$ can be partitioned into disjoint $p+1$ subsets: $V_0,V_1,\ldots,V_p$, where $|V_0|=s$ and $|V_i|=k-s$ for all $i \in [p]$.
\item The induced subgraph $D[V_0]$ of $D(\mathbf{m},Q)$ is defined as $\frac{d}{k}K_{s}$, and $D[V_i]$ is defined as $m_iK_{k-s}$ for all $i \in [p]$.
\item For each $i \in [p]$, the arcs are constructed such that $V_i \xrightarrow{m_i} V_0$.
\item For each $v \in V_0$ and each $i \in [p]$, the arcs are constructed such that $\{v\} \xrightarrow{q_{vi}} V_i $.
\end{itemize}
\end{defi}

We explicitly characterize the multi-digraph $D$ in $\mathfrak{D}_d(\mathcal{S})$ by the multi-digraph $D(\mathbf{m},Q)$ constructed in Definition \ref{defi3.3}, and determine the multiplicity of each of its arcs from Lemma \ref{yinli3.2}.

\begin{pro}\label{pro3.4}
Let $\mathcal{S}=\mathcal{S}(k,s,p)$ be a sunflower, and let $d$ be a positive integer such that $k \mid d$. Then the set of multi-digraphs $\mathfrak{D}_d(\mathcal{S})$ is given by
$$\mathfrak{D}_d(\mathcal{S})=
\{D: D \cong D(\mathbf{m},Q), \mathbf{e}_p^\top\mathbf{m}=\frac{d}{k},Q\mathbf{e}_p=\frac{d}{k}\mathbf{e}_s,Q^{\top}\mathbf{e}_s=s\mathbf{m}\}.$$
\end{pro}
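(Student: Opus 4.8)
The plan is to establish the set equality in Proposition \ref{pro3.4} by a two-way argument: first show that every multi-digraph $D \in \mathfrak{D}_d(\mathcal{S})$ is isomorphic to some $D(\mathbf{m},Q)$ whose parameters satisfy the three linear constraints, and then conversely show that every such $D(\mathbf{m},Q)$ actually arises as $D_f$ for some $f \in \mathcal{F}_d(\mathcal{S})$.

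For the forward inclusion, I would start from an arbitrary $D = D_f$ with $f \in \mathcal{F}_d(\mathcal{S})$ and simply read off its structure from Lemma \ref{yinli3.2}. Part (b) of that lemma gives the arcs $\{v\} \xrightarrow{q_{vi}} P_i$ for $v \in S$, part (c) gives $P_i \xrightarrow{m_i} S \cup P_i$, and part (d) gives $S \xrightarrow{d/k} S$ when $s \ge 2$. Identifying $V_0$ with $S$ and $V_i$ with $P_i$, these arcs match exactly the edge set of $D(\mathbf{m},Q)$ in Definition \ref{defi3.3} (the induced subgraphs $\frac{d}{k}K_s$ on $V_0$ and $m_iK_{k-s}$ on $V_i$ come from the ``$S \cup P_i$'' target in (c) together with (d)). It then remains to derive the three numerical constraints. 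The relation $m(v) = \frac{d}{k}$ for $v \in S$, established in the proof of (d), reflects that each seed is the root of exactly $d/k$ of the $d$ hyperedges; summing the petal-count identity \eqref{eqpi} and bookkeeping the total number $d$ of rooted hyperedges should yield $\mathbf{e}_p^\top \mathbf{m} = \frac{d}{k}$ and $Q\mathbf{e}_p = \frac{d}{k}\mathbf{e}_s$. The constraint $Q^\top \mathbf{e}_s = s\mathbf{m}$ should come from the degree-balance equation \eqref{eqpi} itself: rearranging $km_i = \sum_{v \in P_i} m(v) + \sum_{v \in S} q_{vi}$ and using $|P_i| = k - s$ (so $\sum_{v \in P_i} m(v) = (k-s)m_i$) gives $sm_i = \sum_{v \in S} q_{vi}$, which is precisely the $i$-th coordinate of $Q^\top \mathbf{e}_s = s\mathbf{m}$.

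For the reverse inclusion, given any $D(\mathbf{m},Q)$ satisfying the three constraints, I would construct an explicit $f \in \mathcal{F}_d(\mathcal{S})$ realizing it and then verify that $D_f$ is Eulerian (so that it legitimately belongs to $\mathfrak{D}_d(\mathcal{S})$, whose defining condition requires the digraph to be Eulerian). Concretely, the parameter $q_{vi}$ prescribes how many copies of the hyperedge $e_i = S \cup P_i$ are rooted at seed $v$, and $m_i$ prescribes the common root-multiplicity at each petal vertex of $P_i$; the constraints guarantee these prescriptions are mutually consistent, i.e. that the total over all roots equals $d$ rooted hyperedges. Checking the Eulerian property reduces, via the standard criterion, to verifying $\deg^+(w) = \deg^-(w)$ at every vertex $w$ together with (weak) connectivity. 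The out/in-degree balance at each vertex follows directly from the three constraints by the same degree computations as in Lemma \ref{yinli3.2}, running them in reverse.

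I expect the main obstacle to be the bookkeeping in the reverse direction: translating the three matrix-vector constraints back into a genuinely realizable multiset $f$ of rooted hyperedges, and confirming that the resulting $D_f$ is connected (not merely degree-balanced), since the Eulerian condition requires both. Connectivity is where the structure of the sunflower helps — every petal is joined to the seed set through the arcs $P_i \xrightarrow{m_i} S$ (positivity of $m_i$ is exactly the reason $\mathbf{m}$ is taken to be a positive vector in Definition \ref{defi3.3}), and the seeds are mutually linked by $\frac{d}{k}K_s$ when $s \ge 2$, so the whole digraph is connected. The case $s = 1$ would need a brief separate remark, since then $D[V_0]$ is a single vertex with no internal arcs, but connectivity still holds because every petal attaches to that lone seed. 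Care must also be taken that the isomorphism classes are counted correctly, but since $\mathfrak{D}_d(\mathcal{S})$ is defined up to isomorphism, it suffices to exhibit one representative per admissible $(\mathbf{m},Q)$, which the construction does.
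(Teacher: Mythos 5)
Your proposal is correct and follows essentially the same route as the paper: the forward inclusion reads off the structure of $D_f$ from Lemma \ref{yinli3.2} and extracts the three constraints from the degree-balance identity \eqref{eqpi} together with the counts $m(v)=\frac{d}{k}$ for seeds and the total of $d$ rooted hyperedges, while the reverse inclusion constructs $f$ by assigning $m_i$ copies of $e_i$ rooted at each petal vertex of $P_i$ and $q_{vi}$ copies rooted at each seed $v$. Your additional remarks on verifying that the resulting $D_f$ is Eulerian (degree balance plus connectivity via $m_i>0$) address a point the paper leaves implicit, but they do not change the argument.
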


\begin{proof}
For any $D \in \mathfrak{D}_d(\mathcal{S})$, from Lemma \ref{yinli3.2}, it follows that there exist
$\mathbf{m}$ and $Q$ such that $D \cong D(\mathbf{m},Q)$.
Our first goal is to show that $\mathbf{e}_p^\top\mathbf{m}=\frac{d}{k},Q\mathbf{e}_p=\frac{d}{k}\mathbf{e}_s,Q^{\top}\mathbf{e}_s=s\mathbf{m}$.

For each $i \in [p]$, by \eqref{eqpi} and $m(u)=m_{i}$ for all $u \in P_{i}$,
we have $\sum_{v\in S}q_{vi}=sm_{i}$, that is, $Q^{\top}\mathbf{e}_s=s\mathbf{m}$.
For each vertex $v \in S$, from Lemma \ref{yinli3.2}(b) and (d), we have
$\mathrm{deg}^+(v)=(s-1)\frac{d}{k}+\sum_{i=1}^{p}(k-s)q_{vi}$.
Recall that $\mathrm{deg}^+(v)=(k-1)m(v)=(k-1)\frac{d}{k}$.
Hence, we obtain $\sum_{i=1}^{p}q_{vi}=\frac{d}{k}$ for each $v \in S$, that is, $Q\mathbf{e}_p=\frac{d}{k}\mathbf{e}_s$.
From Lemma \ref{yinli3.2}(b), (c) and (d),
we can get that the total number (counting multiplicities) of arcs in $D$ as
\begin{align*}
|E(D)|=s\frac{d}{k}(k-1)+\sum_{i=1}^{p}(k-s)m_{i}(k-1).
\end{align*}
And since $|E(D)|=d(k-1)$,
we obtain $\sum_{i=1}^{p}m_{i}=\frac{d}{k}$,
that is, $\mathbf{e}_p^\top\mathbf{m}=\frac{d}{k}$.
It implies that
$$\mathfrak{D}_d(\mathcal{S}) \subseteq
    \{D: D \cong D(\mathbf{m},Q), \mathbf{e}_p^\top\mathbf{m}=\frac{d}{k},Q\mathbf{e}_p=\frac{d}{k}\mathbf{e}_s,Q^{\top}\mathbf{e}_s=s\mathbf{m}\}.$$

Let $D(\mathbf{m},Q)$ satisfy $\mathbf{e}_p^\top\mathbf{m}=\frac{d}{k},Q\mathbf{e}_p=\frac{d}{k}\mathbf{e}_s$ and $Q^{\top}\mathbf{e}_s=s\mathbf{m}$.
To show that $D(\mathbf{m},Q) \in \mathfrak{D}_d(\mathcal{S}) $,
we construct $f \in \mathcal{F}_{d}(\mathcal{S})$ such that $D_{f} = D(\mathbf{m},Q)$.
For each $i \in [p]$,
let $e_i =S \cup P_i \in E(\mathcal{S})$.
Construct $f \in \mathcal{F}_{d}(\mathcal{S})$ as follows, for each $i \in [p]$ and each $u \in P_i$, assign exactly $m_i$ hyperedges $e_i$ rooted at $u$ in $f$; and for each $v \in S$ and each $i \in [p]$, assign exactly $q_{vi}$ hyperedges $e_i$ rooted at $v$.
It follows that $D_{f} = D(\mathbf{m},Q)$, which implies that
$$\mathfrak{D}_d(\mathcal{S}) \supseteq
\{D: D \cong D(\mathbf{m},Q), \mathbf{e}_p^\top\mathbf{m}=\frac{d}{k},Q\mathbf{e}_p=\frac{d}{k}\mathbf{e}_s,Q^{\top}\mathbf{e}_s=s\mathbf{m}\}.$$
\end{proof}

Using Proposition \ref{pro3.4}, we can give an intuitive description of the set $\mathcal{H}_d$.
Moreover, some parameters of the spectral moment coefficient are also provided,
including the number of spanning trees and the product of out-degrees of the vertices.

\begin{lem}\label{yinli3.5}
Let $\mathcal{S}=\mathcal{S}(k,s,p)$ be a sunflower, and let $d$ be a positive integer such that $k \mid d$. Then
\begin{enumerate}
\renewcommand{\labelenumi}{(\alph{enumi})}

\item The set $\mathcal{H}_d=\{\mathcal{S}(k,s,t): t\leq \min\{\frac{d}{k},p\}\}$.

\item If $p \leq \frac{d}{k}$, for $ D \cong D(\mathbf{m},Q) \in \mathfrak{D}_d(\mathcal{S})$, the number of spanning trees of $D$ is $$t(D)=d^{s-1}s^{p-1}k^{p(k-s-1)}\prod_{i=1}^{p}m_{i}^{k-s},$$
and
$$\prod_{v\in V(D)}\mathrm{deg}^{+}(v)=(\frac{d}{k})^s(k-1)^{kp-sp+s}\prod_{i=1}^{p}m_{i}^{k-s}.$$


\end{enumerate}
\end{lem}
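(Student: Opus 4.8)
For part (a), the plan is to first note that any $f\in\mathcal F_d$ selects a multiset of complete hyperedges of $\mathcal S$, each of which contains the full seed set $S$; hence $H_f$ consists of $S$ together with all petals actually used, and is therefore isomorphic to the sub-sunflower $\mathcal S(k,s,t)$, where $t$ is the number of distinct petals appearing in $f$. Since $\mathcal S(k,s,t)$ is connected (its seeds meet every petal), the only real condition for membership in $\mathcal H_d$ is that $D_f$ be Eulerian for some such $f$. Applying Proposition \ref{pro3.4} to $\mathcal S(k,s,t)$, an Eulerian $D_f$ forces a positive integer vector $\mathbf m\in\mathbb Z_{>0}^{t}$ with $\mathbf e_t^\top\mathbf m=\frac dk$, so $t\le\frac dk$; together with the obvious bound $t\le p$ this gives $t\le\min\{\frac dk,p\}$. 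Conversely, for each such $t$ I would exhibit an admissible pair $(\mathbf m,Q)$: choose any $\mathbf m\in\mathbb Z_{>0}^{t}$ summing to $\frac dk$, and then use the existence of a nonnegative integer matrix $Q$ with row sums $\frac dk$ and column sums $sm_i$, whose margins are compatible since both total $s\frac dk$. By Proposition \ref{pro3.4} the resulting $D(\mathbf m,Q)$ lies in $\mathfrak D_d(\mathcal S(k,s,t))$, certifying $\mathcal S(k,s,t)\in\mathcal H_d$.

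For part (b) I would work with the out-degree Laplacian $L=\operatorname{diag}(\deg^{+})-A$ of $D\cong D(\mathbf m,Q)$, in the block form induced by the partition $V_0,V_1,\dots,V_p$. The out-degree product is immediate from Lemma \ref{yinli3.2}: each of the $s$ seeds has $\deg^{+}=(k-1)\frac dk$ and each of the $k-s$ vertices of petal $i$ has $\deg^{+}=(k-1)m_i$, so the product is $(\frac dk)^s(k-1)^{s+p(k-s)}\prod_i m_i^{k-s}$, and $s+p(k-s)=kp-sp+s$. For the spanning-tree count I would use that $D$ is Eulerian, so $L$ has zero row and column sums; hence all cofactors of $L$ coincide, and with $n'=s+p(k-s)$ the Matrix--Tree theorem yields $n'\,t(D)=\sum_r\det\tilde L_r=\prod_{\mu\neq0}\mu$, the product of the nonzero eigenvalues of $L$ (a connected Eulerian digraph has a simple zero eigenvalue).

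The heart of the argument is to compute these eigenvalues even though the partition is not equitable, because the seed-to-petal multiplicities $q_{vi}$ break the symmetry among seeds. I would first extract the ``local'' eigenvalues from eigenvectors supported on a single part with zero coordinate sum: such vectors on $V_0$ give eigenvalue $d$ with multiplicity $s-1$, and such vectors on $V_i$ give eigenvalue $km_i$ with multiplicity $k-s-1$; both computations see only the symmetric within-part and petal-to-seed multiplicities, so $Q$ is irrelevant here. These invariant subspaces leave a $(p+1)$-dimensional quotient on which $L$ induces an operator $\bar B$, obtained by averaging $L\mathbf 1_{V_j}$ over each part; the dependence on $Q$ collapses precisely because $Q^\top\mathbf e_s=s\mathbf m$, producing an arrowhead matrix with corner $(k-s)\frac dk$, border entries $-(k-s)m_i$ (top row) and $-sm_i$ (left column), and diagonal entries $sm_i$. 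The product of the nonzero eigenvalues of $\bar B$ equals the sum of its principal $p\times p$ minors, which I would evaluate by the arrowhead Schur-complement formula: deleting the corner gives $s^p\prod_i m_i$, and deleting each petal index gives $(k-s)s^{p-1}\prod_i m_i$, for a total of $(s+p(k-s))s^{p-1}\prod_i m_i$.

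Finally I would assemble the pieces: the product of all nonzero eigenvalues of $L$ is the local product $d^{s-1}k^{p(k-s-1)}\prod_i m_i^{k-s-1}$ times the global product $(s+p(k-s))s^{p-1}\prod_i m_i$, and dividing by $n'=s+p(k-s)$ gives $t(D)=d^{s-1}s^{p-1}k^{p(k-s-1)}\prod_i m_i^{k-s}$. The main obstacle is exactly the non-equitability: one must resist reading the eigenvalues off a naive quotient matrix, and instead verify that the local zero-sum eigenspaces are genuinely $L$-invariant and that the induced quotient operator is well defined, at which point the constraint $Q^\top\mathbf e_s=s\mathbf m$ does the essential work of erasing the unknown matrix $Q$ from the final answer.
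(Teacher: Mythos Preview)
Your argument for part~(a) follows the paper's line closely. For part~(b), the out-degree product is identical to the paper's, and your spanning-tree computation is correct but takes a genuinely different route. The paper evaluates $t(D)=\det(\widehat{L})$ directly by taking the Schur complement of the invertible petal block $Y=\bigoplus_i m_i(kI_{k-s}-J_{k-s})$ inside the Laplacian minor $\widehat{L}$, and then simplifies $\widehat{N}Y^{-1}\widehat{X}$ to a multiple of $J_{s-1}$ using the row-sum constraint $Q\mathbf{e}_p=\tfrac{d}{k}\mathbf{e}_s$; this leaves $\det(dI_{s-1}-\tfrac{d}{s}J_{s-1})\cdot\det(Y)$ to compute. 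You instead block-triangularize $L$ itself: the zero-sum subspaces on each part are genuine $L$-eigenspaces (eigenvalues $d$ and $km_i$), and on the $(p{+}1)$-dimensional quotient the column-sum constraint $Q^\top\mathbf{e}_s=s\mathbf{m}$ is precisely what makes the averaged operator $\bar B$ independent of $Q$; the product of nonzero eigenvalues of the resulting arrowhead then gives $n'\,t(D)$. Your approach produces the full characteristic polynomial of $L$ as a by-product and avoids inverting $Y$, at the cost of having to justify that the induced quotient map is well defined even though the partition is not equitable; the paper's Schur-complement calculation is more direct and, interestingly, relies on the \emph{other} margin condition on $Q$. Both arguments make transparent, in complementary ways, why the final formula is free of $Q$.
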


\begin{proof}
Note that $\mathcal{H}_{d}$ is a subset of connected subhypergraphs of $\mathcal{S}(k,s,p)$,
then we have $\mathcal{H}_{d}\subseteq\{\mathcal{S}(k,s,t): t\leq p\}$.
For $\mathcal{S}(k,s,t) \in \mathcal{H}_{d}$,
it follows from Proposition \ref{pro3.4} that there exists $t$-dimensional positive integer vector  $\mathbf{m}$ such that $\mathbf{e}^\top_t \mathbf{m}=\frac{d}{k}$.
Thus, we get $t \leq \frac{d}{k}$, and then $\mathcal{H}_d\subseteq\{\mathcal{S}(k,s,t): t\leq \min\{\frac{d}{k},p\}\}$.
On the other hand,
Proposition \ref{pro3.4} shows that $\mathfrak{D}_d(\mathcal{S}(k,s,t) )$ is not empty for any $t\leq \min\{\frac{d}{k},p\}$.
It implies $\mathcal{H}_d=\{\mathcal{S}(k,s,t): t\leq \min\{\frac{d}{k},p\}\}$.

Next, we will use the Matrix-Tree theorem \cite{duval2009simplicial} and an expression for the determinant involving the Schur complement \cite{brualdi1983determinantal} to give the number of spanning trees $t(D)$ for $ D \cong D(\mathbf{m},Q) \in \mathfrak{D}_d(\mathcal{S})$, when $p \leq \frac{d}{k}$.

Let $I_{t}$ and $J_{t}$ be an identity matrix and an all-ones matrix of size $t \times t$, respectively.
We write the Laplacian matrix of $D$ as a block matrix
\begin{align*}
L=\begin{bmatrix} M & N \\ X & Y \end{bmatrix}.
\end{align*}
where the matrix $M=L[S]$ is an $s \times s$ matrix given by $M=dI_{s}-\frac{d}{k}J_{s}$, i.e., the diagonal entries are $\frac{d}{k}(k-1)$ and the off-diagonal entries $-\frac{d}{k}$.
The matrix $Y=L[\cup_{i=1}^pP_i]$ is a diagonal block matrix with diagonal blocks $Y_{i}=L[P_i]=m_{i}(kI_{k-s}-J_{k-s})$ for all $i \in [p]$.

For each $i \in [p]$, let $\mathbf{q}_i$ denote the $i$-th column of $Q$,  so that $Q=[\mathbf{q}_1\cdots\mathbf{q}_p]$.
The matrix $N=\begin{bmatrix} N_{1}  \cdots  N_{p} \end{bmatrix}$ is a block matrix with blocks $N_{i}=-\mathbf{q}_i\mathbf{e}_{k-s}^{\top}$ for all $i \in [p]$.
The matrix $X=\begin{bmatrix} X_{1} \\ \vdots \\ X_{p} \end{bmatrix}$ is also a block matrix with blocks $X_{i}=-m_{i}\mathbf{e}_{k-s}\mathbf{e}_{s}^{\top}$ for all $i \in [p]$.

Let $\widehat{L}$ denote the submatrix of $L$ obtained by deleting the first row and column.
Then we can write it as block matrix
\begin{align*}
\widehat{L}=\begin{bmatrix} \widehat{M} & \widehat{N} \\ \widehat{X} & Y \end{bmatrix}.
\end{align*}
It is known that $t(D)=\mathrm{det}(\widehat{L})$ from the Matrix-tree Theorem \cite{duval2009simplicial}.
Note that all diagonal blocks $Y_{i}'s$ of $Y$ are invertible,
indeed, $Y_{i}^{-1}=(ks)^{-1}m_{i}^{-1}(sI_{k-s}+J_{k-s})$.
Then $Y$ is invertible.
From the determinant formula involving the Schur complement \cite{brualdi1983determinantal},
we have that
\begin{equation}\label{eq3}
t(D)=\mathrm{det}(\widehat{L})=\begin{cases}
\mathrm{det}(Y)\mathrm{det}(\widehat{M}-\widehat{N}Y^{-1}\widehat{X}), &\text{if $s>1$},\\
\mathrm{det}(Y), &\text{if $s=1$},
\end{cases}
\end{equation}
where $\widehat{M}-\widehat{N}Y^{-1}\widehat{X}$ is the Schur complement of $Y$ in $\widehat{L}$.
Let $\hat{\mathbf{q}}_{i}$ be a vector obtained by truncating the first entry of $\mathbf{q}_{i}$.
Note that the matrix $\widehat{M}=dI_{s-1}-\frac{d}{k}J_{s-1}$,
the blocks of $\widehat{N}$ and $\widehat{X}$ are $\widehat{N_{i}}=-\hat{\mathbf{q}}_{i}\mathbf{e}_{k-s}^{\top}$ and $\widehat{X_{i}}=-m_{i}\mathbf{e}_{k-s}\mathbf{e}_{s-1}^{\top}$, respectively.
Then we have that
\begin{align*}
\widehat{N}Y^{-1}\widehat{X}&=\sum_{i=1}^{p}\widehat{N_{i}}Y_{i}^{-1}\widehat{X_{i}}\\
&=\sum_{i=1}^{p}\left(-\hat{\mathbf{q}}_{i}\mathbf{e}_{k-s}^{\top}\right)(ks)^{-1}m_{i}^{-1}(sI_{k-s}+J_{k-s})\left(-m_{i}\mathbf{e}_{k-s}\mathbf{e}_{s-1}^{\top}\right)\\
&=(ks)^{-1}\sum_{i=1}^{p}\hat{\mathbf{q}}_{i}\mathbf{e}_{k-s}^{\top}(sI_{k-s}+J_{k-s})\mathbf{e}_{k-s}\mathbf{e}_{s-1}^{\top}\\
&=(ks)^{-1}\sum_{i=1}^{p}\hat{\mathbf{q}}_{i}k(k-s)\mathbf{e}_{s-1}^{\top}.
\end{align*}

From Proposition \ref{pro3.4},
it follows that $Q\mathbf{e}_p=\frac{d}{k}\mathbf{e}_s$.
By removing the first entry of each column, we have $\sum_{i=1}^{p}\hat{\mathbf{q}}_{i}\mathbf{e}_{s-1}^{\top}=\frac{d}{k}J_{s-1}$,
which implies that $\widehat{N}Y^{-1}\widehat{X}=\frac{d(k-s)}{ks}J_{s-1}$.
Then we get $\widehat{M}-\widehat{N}Y^{-1}\widehat{X}=dI_{s-1}-\frac{d}{s}J_{s-1}$, and hence
\begin{align}\label{eq4}
\mathrm{det}(\widehat{M}-\widehat{N}Y^{-1}\widehat{X})=\mathrm{det}(dI_{s-1}-\frac{d}{s}J_{s-1})=\frac{d^{s-1}}{s}.
\end{align}
Note that $\mathrm{det}(Y_{i})=\mathrm{det}(m_{i}(kI_{k-s}-J_{k-s}))=m_{i}^{k-s}sk^{k-s-1}$ for all $i \in [p]$,
then we have
\begin{align}\label{eq5}
\mathrm{det}(Y)=\prod_{i=1}^{p}\mathrm{det}(Y_{i})=(sk^{k-s-1})^{p}\prod_{i=1}^{p}m_{i}^{k-s}.
\end{align}
Substituting (\ref{eq4}) and (\ref{eq5}) into (\ref{eq3}),
we obtain the expression for $t(D)$.

Finally, from  Proposition \ref{pro3.4},
we observe that the out-degree $\mathrm{deg}^{+}(v)=\frac{d}{k}(k-1)$ for all $v \in S$
and $\mathrm{deg}^{+}(u)=m_{i}(k-1)$ for all $u \in P_{i}$ and each $i \in [p]$.
Thus, we get
\begin{align*}
\prod_{v\in V(D)}\mathrm{deg}^{+}(v)=(\frac{d}{k}(k-1))^{s}\prod_{i=1}^{p}(m_{i}(k-1))^{k-s}
=(\frac{d}{k})^s(k-1)^{kp-sp+s}\prod_{i=1}^{p}m_{i}^{k-s}.
\end{align*}
\end{proof}

For positive integers $p$ and $s$,
recall that
\[\Xi^{p}=\{(\xi_{i})\in\mathbb{C}^{p}: \mbox{$\xi_{i}^{s+1}-\xi_{i}=0$ for all $i\in[p]$}\},\]
and $\mathbf{e}_{p}$ denotes the $p$-dimensional all-ones column vector.
We are now ready to give an expression for the spectral moments of the sunflower as follows.

\begin{thm}\label{dingli2}

The $d$-th order spectral moment of the sunflower $\mathcal{S}=\mathcal{S}(k,s,p)$ is
\begin{equation*}
\mathrm{S}_{d}(\mathcal{S})=\begin{cases}
\sum_{\xi\in\Xi^{p}}\frac{1}{s}K^{p-|\mathrm{supp}(\xi)|}k^{|\mathrm{supp}(\xi)|(k-s-1)+s}(\mathbf{e}_{p}^{\top}\xi)^{\frac{sd}{k}},
&\text{if $k\mid d$},\\
0,
&\text{if $k\nmid d$},
\end{cases}
\end{equation*}
where $K=(k-1)^{k-s}-sk^{k-s-1}$.

\end{thm}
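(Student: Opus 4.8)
The plan is to dispose of the case $k \nmid d$ immediately, since the $k$-symmetry of the spectrum of the cored hypergraph $\mathcal{S}$ noted above already forces $\mathrm{S}_d(\mathcal{S})=0$ there, and then to devote everything to the case $k \mid d$ by feeding the structural results of this section into the master formula of Lemma \ref{yinli1}. By Lemma \ref{yinli3.5}(a) the only subhypergraphs that contribute are $\widehat{H}=\mathcal{S}(k,s,t)$ with $1 \le t \le \min\{d/k,p\}$, and a copy of $\mathcal{S}(k,s,t)$ inside $\mathcal{S}(k,s,p)$ is just a choice of $t$ of the $p$ petals, so $N_{\mathcal{S}}(\mathcal{S}(k,s,t))=\binom{p}{t}$. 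Thus the whole task collapses to evaluating the per-copy coefficient $c_d(\mathcal{S}(k,s,t))$.

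The first simplification in that coefficient is that, by Lemma \ref{yinli3.5}(b), the factor $\prod_i m_i^{k-s}$ appears in both $t(D)$ and $\prod_v \mathrm{deg}^+(v)$ and cancels, so the ratio $t(D)/\prod_v\mathrm{deg}^+(v)$ depends only on $t$, namely $R_t=s^{t-1}k^{t(k-s-1)+s}\big/\big(d(k-1)^{t(k-s)+s}\big)$. Next I would count, for a fixed labeled copy of $\mathcal{S}(k,s,t)$ and a fixed $D(\mathbf{m},Q)$, the tuples $f$ with $D_f=D(\mathbf{m},Q)$: arranging the rooted edges at a seed $v$ contributes a multinomial $\binom{d/k}{q_{v1},\ldots,q_{vt}}$ together with $((k-1)!)^{d/k}$ internal orderings, while each petal vertex contributes only orderings, for a total of $((k-1)!)^{d}\prod_{v\in S}\binom{d/k}{q_{v1},\ldots,q_{vt}}$, whose prefactor $((k-1)!)^d$ cancels the $((k-1)!)^{-d}$ in Lemma \ref{yinli1}. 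Summing over the admissible $(\mathbf{m},Q)$ of Proposition \ref{pro3.4} and tracking the prefactor $(k-1)^{n-1}$ with $n=s+(k-s)p$, I expect to reach
\begin{equation*}
\mathrm{S}_d(\mathcal{S})=\sum_{t=1}^{p}\binom{p}{t}s^{t-1}k^{t(k-s-1)+s}(k-1)^{(k-s)(p-t)}\Sigma_t,\qquad \Sigma_t=\sum_{(\mathbf{m},Q)}\prod_{v\in S}\binom{d/k}{q_{v1},\ldots,q_{vt}},
\end{equation*}
the inner sum running over $\mathbf{m}\ge\mathbf{1}$ and $Q\ge0$ with $\mathbf{e}_t^\top\mathbf{m}=d/k$, $Q\mathbf{e}_t=\tfrac{d}{k}\mathbf{e}_s$, $Q^\top\mathbf{e}_s=s\mathbf{m}$ (and $\Sigma_t=0$ for $t>d/k$, which accounts for the truncation in Lemma \ref{yinli3.5}(a)).

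The crux, and the step I expect to be the main obstacle, is the evaluation of $\Sigma_t$. The condition $Q^\top\mathbf{e}_s=s\mathbf{m}$ says every column sum of $Q$ is divisible by $s$, which is exactly a roots-of-unity filter: relaxing $\mathbf{m}\ge\mathbf{1}$ to $\mathbf{m}\ge\mathbf{0}$ and filtering each column modulo $s$ turns the multinomial sum into a product over the $s$ seeds of $(\zeta_1+\cdots+\zeta_t)^{d/k}$, giving
\begin{equation*}
\sum_{(\mathbf{m}\ge0,\,Q)}\prod_{v\in S}\binom{d/k}{q_{v1},\ldots,q_{vt}}=\frac{1}{s^{t}}\sum_{\zeta_1^s=\cdots=\zeta_t^s=1}(\zeta_1+\cdots+\zeta_t)^{sd/k}=:\frac{1}{s^{t}}W_t,
\end{equation*}
and I would recognize $\binom{p}{t}W_t=\sum_{\xi\in\Xi^p,\,|\mathrm{supp}(\xi)|=t}(\mathbf{e}_p^\top\xi)^{sd/k}$, which is what links the computation back to $\Xi^p$. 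Reinstating $\mathbf{m}\ge\mathbf{1}$ by inclusion–exclusion over the set of petals that are actually used yields $\Sigma_t=\sum_{t'=0}^{t}(-1)^{t-t'}\binom{t}{t'}s^{-t'}W_{t'}$. Substituting this into the displayed formula for $\mathrm{S}_d(\mathcal{S})$, exchanging the order of summation, applying $\binom{p}{t}\binom{t}{t'}=\binom{p}{t'}\binom{p-t'}{t-t'}$, and finally the binomial theorem in the form $(k-1)^{(k-s)(p-t')}\sum_{r}\binom{p-t'}{r}\big(-sk^{k-s-1}/(k-1)^{k-s}\big)^{r}=K^{p-t'}$ collapses the double sum into $\sum_{t'}\tfrac{1}{s}K^{p-t'}k^{t'(k-s-1)+s}\binom{p}{t'}W_{t'}$, which is exactly the asserted formula after rewriting the $W_{t'}$ in terms of $\Xi^p$. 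The delicate part throughout is the exponent bookkeeping that makes the prefactor $(k-1)^{(k-s)(p-t)}$, the inclusion–exclusion sign, and the monomial $k^{k-s-1}$ combine precisely into $K=(k-1)^{k-s}-sk^{k-s-1}$, and that is where I would concentrate the verification.
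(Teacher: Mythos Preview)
Your proposal is correct and follows essentially the same route as the paper: dispose of $k\nmid d$ by $k$-symmetry, reduce via Lemma~\ref{yinli1} and Lemma~\ref{yinli3.5} to summing over $t\le\min\{d/k,p\}$ with weight $\binom{p}{t}$, observe that the ratio $t(D)/\prod_v\mathrm{deg}^+(v)$ is constant in $(\mathbf m,Q)$, count the $f$'s giving rise to a fixed $D(\mathbf m,Q)$, evaluate the resulting sum over $(\mathbf m,Q)$ by a roots-of-unity argument, pass from $\mathbf m\ge\mathbf 0$ to $\mathbf m\ge\mathbf 1$ by inclusion--exclusion, swap the two sums using $\binom{p}{t}\binom{t}{t'}=\binom{p}{t'}\binom{p-t'}{t-t'}$, and collapse with the binomial theorem to produce $K^{p-t'}$. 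The only cosmetic difference is that the paper first collapses the $Q$-sum (via the multinomial Vandermonde identity) to the single multinomial $\phi(\mathbf m)=\binom{sd/k}{sm_1,\ldots,sm_t}$ and then recognizes $\sum_{\mathbf m\ge\mathbf 0}\phi(\mathbf m)=s^{-t}\sum_{\xi\in\Xi^t_0\setminus\{\mathbf 0\}}(\mathbf e_t^\top\xi)^{sd/k}$, whereas you keep the $Q$-sum explicit and apply the roots-of-unity filter directly to the product $\prod_{v\in S}(\zeta_1+\cdots+\zeta_t)^{d/k}$; these two orderings yield the same $s^{-t}W_t$.
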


\begin{proof}

Recall that $\mathrm{S}_{d}(\mathcal{S})=0$ for $k\nmid d$,
since $\mathcal{S}$ is a cored hypergraph.
Thus, in the remainder of the proof, we only consider the case $k\mid d$.

We write $\mathcal{S}_t=\mathcal{S}(k,s,t)$ for brevity.
From Lemma \ref{yinli1} and  Lemma \ref{yinli3.5}(a), we have
\begin{align}\label{eq6}
\mathrm{S}_{d}(\mathcal{S}_p)=(k-1)^{p(k-s)+s-1}\sum_{t\leq \min\{\frac{d}{k},p\}}\binom{p}{t}c_{d}(\mathcal{S}_t),
\end{align}
where
\begin{align}\label{eq7}
c_{d}(\mathcal{S}_t)=d(k-1)((k-1)!)^{-d}\sum_{D\in\mathfrak{D}_{d}(\mathcal{S}_t)}\frac{|\{f:\mbox{$f\in\mathcal{F}_{d}(\mathcal{S}_t)$ and $D_{f}\cong D$}\}|t(D)}{\prod_{v\in V(D)}\mathrm{deg}^{+}(v)}.
\end{align}
From Lemma \ref{yinli3.5}(b), we have that
\begin{align}\label{eq8}
\frac{t(D)}{\prod_{v\in V(D)}\mathrm{deg}^{+}(v)}=\frac{s^{t-1}k^{t(k-s-1)+s}}{d(k-1)^{t(k-s)+s}}.
\end{align}
Substituting \eqref{eq8} to \eqref{eq7}, we have that
\begin{align}\label{eq12}
  c_{d}(\mathcal{S}_t)=\frac{s^{t-1}k^{t(k-s-1)+s}}{((k-1)!)^{d}(k-1)^{t(k-s)+s-1}}\sum_{D\in\mathfrak{D}_{d}(\mathcal{S}_t)}|\{f:\mbox{$f\in\mathcal{F}_{d}(\mathcal{S}_t)$ and $D_{f}\cong D$}\}|.
\end{align}
Let $\mathbb{Z}_{+}$ and $\mathbb{N}$ denote  the sets of positive integers and natural numbers, respectively.
For any $D \in \mathfrak{D}_d(\mathcal{S}_t)$, Proposition \ref{pro3.4} implies that $D \cong D(\mathbf{m},Q)$
for some $\mathbf{m} \in \mathbb{Z}_{+}^t$ satisfying $\mathbf{e}_t^\top\mathbf{m}=\frac{d}{k}$ and $Q \in \mathbb{N}^{s \times t}$ satisfying $Q\mathbf{e}_t=\frac{d}{k}\mathbf{e}_s,Q^{\top}\mathbf{e}_s=s\mathbf{m}$.
Thus, we have that
\begin{align*}
  &\sum_{D\in\mathfrak{D}_{d}(\mathcal{S}_t)}|\{f:\mbox{$f\in\mathcal{F}_{d}(\mathcal{S}_t)$ and $D_{f}\cong D$}\}| \\ &=\sum_{\mathbf{m}:\mathbf{e}_t^\top\mathbf{m}=\frac{d}{k}}  |\{f\in\mathcal{F}_{d}(\mathcal{S}_t):\mbox{  $D_{f} = D(\mathbf{m},Q)$, $Q\mathbf{e}_t=\frac{d}{k}\mathbf{e}_s$ and $Q^{\top}\mathbf{e}_s=s\mathbf{m}$}\}| .
\end{align*}

Let $e_{i}=S \cup P_{i} \in E(\mathcal{S}_t)$ for each $i \in [t]$.
For $f=(v_1\alpha_1,v_2\alpha_2,\ldots,v_d\alpha_d) \in \mathcal{F}_{d}(\mathcal{S}_t)$, recall that $v_1\leq v_2\leq \cdots \leq v_d$ and
the $k$-tuple $v_j\alpha_j$ represent a hyperedge $e_i \in E(\mathcal{S}_t)$ rooted at $v_j$.
For each $i \in [t]$ and each $v \in P_i$, we observe that all hyperedges with root $v$ are $e_i$ in $f$. Moreover,
Lemma  \ref{yinli3.2}(c) implies that $m_i$ is the number of such hyperedges $e_i$ with root $v$  in $f$.
For each $v \in S$,  Lemma  \ref{yinli3.2}(b) implies that $q_{vi}$ is the number of hyperedges $e_i$ with root $v$.

In order to count the number of $f$ that give rise to a given $D(\mathbf{m},Q)$, note that we can permute the $k-1$ non-roots of each hyperedges (without changing  $D_f$).
We can also permute hyperedges with the same root.
If this root is not a seed, then this however leads to the same $f$ because the permuted hyperedges are the same.
If the root, $v$ say, is a seed, then it occurs $\sum_{i \in [t]}q_{vi}$ times, but again, permuting the same hyperedges (with hyperedge $e_i$ with root $v$ occurring $q_{vi}$ times) leads to same $f$. It implies that
\begin{align*}
   &|\{f\in\mathcal{F}_{d}(\mathcal{S}_t):\mbox{  $D_{f} = D(\mathbf{m},Q)$, $Q\mathbf{e}_t=\frac{d}{k}\mathbf{e}_s$ and $Q^{\top}\mathbf{e}_s=s\mathbf{m}$}\}| \\
   & = ((k-1)!)^{d}\frac{(\sum_{v\in S}\sum_{i\in[t]}q_{vi})!}{\prod_{i=1}^{t}(\sum_{v \in S}q_{vi})!}\\
   &=((k-1)!)^{d}\frac{(s\frac{d}{k})!}{\prod_{i=1}^{t}(sm_{i})!}.
\end{align*}
Here, we write $\phi(\mathbf{m})=\frac{(s\frac{d}{k})!}{\prod_{i=1}^{t}(sm_{i})!}$.
Let $\mathbb{M}^t_{+}=\{\mathbf{m} \in \mathbb{Z}_{+}^t: \mathbf{e}_t^\top\mathbf{m}=\frac{d}{k} \}$, and let $\mathbb{M}^t_{\geq 0}=\{\mathbf{m} \in \mathbb{N}^t: \mathbf{e}_t^\top\mathbf{m}=\frac{d}{k} \}$.
Then we have that
\begin{align*}
\sum_{D\in\mathfrak{D}_{d}(\mathcal{S}_t)}|\{f:\mbox{$f\in\mathcal{F}_{d}(\mathcal{S}_t)$ and $D_{f}\cong D$}\}| =((k-1)!)^{d}\sum_{\mathbf{m} \in \mathbb{M}^t_{+}}\phi(\mathbf{m}).
\end{align*}
Note that $\sum_{\mathbf{m} \in \mathbb{M}^t_{\geq 0}}\phi(\mathbf{m})=\sum_{i=1}^{t}\binom{t}{i}\sum_{\mathbf{m} \in \mathbb{M}^i_{+}}\phi(\mathbf{m})$.
From the Principle of Inclusion-Exclusion \cite{Stanley2012enumerative}, we have that
\begin{align*}
\sum_{\mathbf{m} \in \mathbb{M}^t_{+}}\phi(\mathbf{m}) =\sum_{i=1}^{t}(-1)^{t-i}\binom{t}{i}\sum_{\mathbf{m} \in \mathbb{M}^i_{\geq 0}}\phi(\mathbf{m}).
\end{align*}
Recall that $\Xi_{0}^{t}=\{\xi\in\Xi^{t}: \mbox{$|\mathrm{supp}(\xi)|=0$ or $t$ }\}$.
Note that $\phi(\mathbf{m})$ is the so-called \emph{multinomial coefficient} \cite{Stanley2012enumerative}, it implies that
$$\sum_{\mathbf{m} \in \mathbb{M}^t_{\geq 0}}\phi(\mathbf{m})
=\frac{1}{s^{t}}\sum_{\xi\in\Xi^{t}_0 \setminus \{\mathbf{0}\}}(\mathbf{e}_{t}^{\top}\xi)^{s\frac{d}{k}}.$$
By \eqref{eq12}, we have
\begin{align*}
c_{d}(\mathcal{S}_t)
=\frac{s^{t-1}k^{t(k-s-1)+s}}{(k-1)^{t(k-s)+s-1}}\sum_{i=1}^{t}(-1)^{t-i}\binom{t}{i}\frac{1}{s^{i}}
\sum_{\xi\in\Xi^{i}_{0} \setminus \{\mathbf{0}\}}(\mathbf{e}_{i}^{\top}\xi)^{s\frac{d}{k}}.
\end{align*}
Substituting this into \eqref{eq6}, we obtain
\begin{align*}
&\mathrm{S}_{d}(\mathcal{S}) \\
&=\frac{k^{s}}{s}\sum_{t=1}^{p}\binom{p}{t}(k-1)^{(k-s)(p-t)}(sk^{k-s-1})^{t}\sum_{i=1}^{t}(-1)^{t-i}\binom{t}{i}\frac{1}{s^{i}}\sum_{\xi\in\Xi^{i}_{0} \setminus \{\mathbf{0}\}}(\mathbf{e}_{i}^{\top}\xi)^{s\frac{d}{k}}\\
&=\frac{k^{s}}{s}\sum_{i=1}^{p}\sum_{t=i}^{p}\binom{p}{t}\binom{t}{i}(k-1)^{(k-s)(p-t)}(sk^{k-s-1})^{t}(-1)^{t-i}\frac{1}{s^{i}}\sum_{\xi\in\Xi^{i}_{0} \setminus \{\mathbf{0}\}}(\mathbf{e}_{i}^{\top}\xi)^{s\frac{d}{k}}.
\end{align*}
Since $\binom{p}{t}\binom{t}{i}=\binom{p}{i}\binom{p-i}{t-i}$ for $i \leq t\leq p$,
we have that
\begin{align*}
\mathrm{S}_{d}(\mathcal{S})&=\sum_{i=1}^{p}\binom{p}{i}\frac{k^{s}}{s}K^{p-i}(k^{k-s-1})^{i}\sum_{\xi\in\Xi^{i}_{0} \setminus \{\mathbf{0}\}}(\mathbf{e}_{i}^{\top}\xi)^{s\frac{d}{k}}\\
&=\sum_{\xi\in\Xi}\frac{k^{s}}{s}K^{p-|\mathrm{supp}(\xi)|}k^{|\mathrm{supp}(\xi)|(k-s-1)}(\mathbf{e}_{p}^{\top}\xi)^{\frac{sd}{k}},
\end{align*}
where we use the identity $$\sum_{t=i}^{p}(-1)^{t-i}\binom{p-i}{t-i}(k-1)^{(k-s)(p-t)}(sk^{k-s-1})^{t-i}=((k-1)^{k-s}-sk^{k-s-1})^{p-i}.$$
\end{proof}

By Theorems \ref{dingli1} and \ref{dingli2},
we obtain the characteristic polynomial of the sunflower as follows,
which is the main result in this paper.

\begin{thm}\label{mainresult}

The characteristic polynomial of the sunflower $\mathcal{S}=\mathcal{S}(k,s,p)$ is

\begin{align*}
\phi_{\mathcal{S}}(\lambda)=\prod_{\xi\in\Xi^{p}}(\lambda^{k}-(\mathbf{e}_{p}^{\top}\xi)^{s})^{\mu(\xi)},
\end{align*}
where
\begin{equation*}
\mu(\xi)=\begin{cases}
\frac{(p(k-s)+s)}{k}(k-1)^{p(k-s)+s-1}-\frac{k^{s-1}}{s}((k-1)^{p(k-s)}-K^{p}),
&\text{if $\xi=\mathbf{0}$},\\
\frac{1}{s}K^{p-|\mathrm{supp}(\xi)|}k^{|\mathrm{supp}(\xi)|(k-s-1)+s-1},
&\text{if $\xi\neq\mathbf{0}$},
\end{cases}
\end{equation*}
and $K=(k-1)^{k-s}-sk^{k-s-1}$.

\end{thm}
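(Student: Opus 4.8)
The plan is to exhibit the right-hand side as a candidate polynomial and identify it with $\phi_{\mathcal{S}}$ by matching degrees and all power sums. Write $n=s+p(k-s)$ for the number of vertices and set
$$\psi(\lambda)=\prod_{\xi\in\Xi^{p}}\left(\lambda^{k}-(\mathbf{e}_{p}^{\top}\xi)^{s}\right)^{\mu(\xi)}.$$
I will use the standard facts that $\phi_{\mathcal{S}}$ is monic of degree $n(k-1)^{n-1}$ and that its roots, counted with multiplicity, are exactly the eigenvalues of $\mathcal{S}$, so that $\mathrm{S}_{d}(\mathcal{S})$ is the $d$-th power sum of the roots of $\phi_{\mathcal{S}}$. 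Since a monic polynomial is determined by its degree together with the power sums of its roots (via Newton's identities), it suffices to check that $\psi$ is monic of degree $n(k-1)^{n-1}$ and that, for every $d\geq1$, the $d$-th power sum of the roots of $\psi$ equals $\mathrm{S}_{d}(\mathcal{S})$. Theorem \ref{dingli1} guarantees in addition that the roots of $\psi$ are genuinely eigenvalues, which, together with the $k$-symmetry of the spectrum noted before Theorem \ref{dingli2}, lets one group the coincident factors $\lambda^{k}-(\mathbf{e}_{p}^{\top}\xi)^{s}$ so that the grouped exponents are honest non-negative integers; I would record this so that $\psi$ is legitimately a monic polynomial.

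The power-sum computation is the easy half. The roots of a single factor $\lambda^{k}-(\mathbf{e}_{p}^{\top}\xi)^{s}$ are the $k$-th roots of $(\mathbf{e}_{p}^{\top}\xi)^{s}$, and the sum of their $d$-th powers equals $k(\mathbf{e}_{p}^{\top}\xi)^{sd/k}$ when $k\mid d$ and $\mathbf{e}_{p}^{\top}\xi\neq0$, and is $0$ otherwise for $d\geq1$. Consequently the $d$-th power sum of $\psi$ is $0$ for $k\nmid d$, matching $\mathrm{S}_{d}(\mathcal{S})$, while for $k\mid d$ it equals $\sum_{\xi\in\Xi^{p}}k\,\mu(\xi)(\mathbf{e}_{p}^{\top}\xi)^{sd/k}$, in which the summands with $\mathbf{e}_{p}^{\top}\xi=0$ (in particular $\xi=\mathbf{0}$) vanish. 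For $\xi\neq\mathbf{0}$ the defining value of $\mu(\xi)$ makes $k\,\mu(\xi)=\frac{1}{s}K^{p-|\mathrm{supp}(\xi)|}k^{|\mathrm{supp}(\xi)|(k-s-1)+s}$, so this sum is exactly the expression for $\mathrm{S}_{d}(\mathcal{S})$ in Theorem \ref{dingli2}, term by term over $\xi$. No independence argument among the factors is needed here, precisely because both sides are already written as sums indexed by $\xi\in\Xi^{p}$.

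The delicate half, and what I expect to be the main obstacle, is the degree identity, because it is what fixes $\mu(\mathbf{0})$: the eigenvalue $0$ contributes nothing to any positive power sum, so its multiplicity is invisible to Theorem \ref{dingli2} and must instead be read off from $\deg\psi=k\sum_{\xi\in\Xi^{p}}\mu(\xi)=n(k-1)^{n-1}$. To evaluate $\sum_{\xi\neq\mathbf{0}}\mu(\xi)$ I would sort the vectors of $\Xi^{p}$ by support size: there are $\binom{p}{i}s^{i}$ vectors $\xi$ with $|\mathrm{supp}(\xi)|=i$, each nonzero coordinate being one of the $s$ solutions of $\xi_{i}^{s}=1$, which gives
$$k\sum_{\xi\neq\mathbf{0}}\mu(\xi)=\frac{k^{s}}{s}\sum_{i=1}^{p}\binom{p}{i}\left(sk^{k-s-1}\right)^{i}K^{p-i}=\frac{k^{s}}{s}\left((k-1)^{p(k-s)}-K^{p}\right),$$
where the collapse uses the binomial theorem together with the identity $sk^{k-s-1}+K=(k-1)^{k-s}$, which is just the definition of $K$. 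Adding the claimed value $k\,\mu(\mathbf{0})=n(k-1)^{n-1}-\frac{k^{s}}{s}\left((k-1)^{p(k-s)}-K^{p}\right)$ yields $k\sum_{\xi\in\Xi^{p}}\mu(\xi)=n(k-1)^{n-1}=\deg\phi_{\mathcal{S}}$. With the degree and all power sums matched, Newton's identities give $\psi=\phi_{\mathcal{S}}$, completing the proof.
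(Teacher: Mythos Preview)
Your proposal is correct and essentially identical to the paper's own proof: both compute $\sum_{\xi\neq\mathbf{0}}\mu(\xi)$ by sorting on $|\mathrm{supp}(\xi)|$ and collapsing via the binomial identity $sk^{k-s-1}+K=(k-1)^{k-s}$, match the power sums against Theorem~\ref{dingli2}, and recover $\mu(\mathbf{0})$ from the degree constraint. The only cosmetic differences are that the paper treats the case $k=2$ separately and phrases the identification as ``verify degree and spectral moments'' rather than invoking Newton's identities explicitly; your remark about grouping coincident factors to ensure integer exponents corresponds to the paper's observation that $\mu(\xi)=0$ when $s=k-1$ and $\xi\notin\Xi_0^p$ (since then $K=0$).
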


\begin{proof}
When $k=2$, we have $s=1$.
In this case, the sunflower $\mathcal{S}$ reduces to a star graph.
It is known that $\lambda^{p-1}(\lambda^{2}-p)$ is the characteristic polynomial of $\mathcal{S}$ \cite{cvetkovic1980spectra}.
In the following proof, we only consider the case where $k \geq 3$.

From the complete set of eigenvalues of $\mathcal{S}$ (see Theorem  \ref{dingli1}) and the $k$-symmetry of its spectrum,
it follows that the characteristic polynomial of $\mathcal{S}$ of the form
\begin{align*}
\phi(\lambda)=\prod_{\xi\in\Xi^{p}}(\lambda^{k}-(\mathbf{e}_{p}^{\top}\xi)^{s})^{\mu(\xi)}.
\end{align*}
Note that $\lambda^k-(\mathbf{e}_{p}^{\top}\xi)^{s}$ does not necessarily give rise to the factor of $\phi(\lambda)$,
if $s=k-1$ and $\xi \notin \Xi_0^p$. In this case, we have $\mu(\xi)=0$.

In what follows, we verify that the degree of $\phi(\lambda)$ is $(p(k-s)+s)(k-1)^{p(k-s)+s-1}$,
and that the sum of $d$-th powers of its roots coincides with $\mathrm{S}_{d}(\mathcal{S})$. This will complete the proof.

By Theorem \ref{dingli2}, it can be readily verified that the sum of $d$-th powers of the roots of $\phi(\lambda)$ coincides with the spectral moment $\mathrm{S}_d(\mathcal{S})$.
The total multiplicity of the factor $\lambda^{k}-(\mathbf{e}_{p}^{\top}\xi)^{s}$, as $\xi$ ranges over $\Xi^{p} \setminus \{\mathbf{0}\}$, is given by
\begin{align*}
\sum_{\xi \in \Xi^{p} \setminus \{\mathbf{0}\}}\mu(\xi)
&=\sum_{\xi \in \Xi^{p} \setminus \{\mathbf{0}\}}\frac{1}{s}K^{p-|\mathrm{supp}(\xi)|}k^{|\mathrm{supp}(\xi)|(k-s-1)+s-1} \\
&=\sum_{i=1}^{p}\binom{p}{i}s^{i}\frac{1}{s}K^{p-i}k^{i(k-s-1)+s-1}\\
&=\frac{k^{s-1}}{s}\sum_{i=1}^{p}\binom{p}{i}\left((k-1)^{k-s}-sk^{k-s-1}\right)^{p-i}(sk^{k-s-1})^{i}\\
&=\frac{k^{s-1}}{s}((k-1)^{p(k-s)}-K^{p}).
\end{align*}
We conclude that the degree of $\phi(\lambda)$ is
$$\deg\phi(\lambda)=k(\mu(\mathbf{0})+\sum_{\xi \in \Xi^{p} \setminus \{\mathbf{0}\}}\mu(\xi))=(p(k - s) + s)(k - 1)^{p(k - s) + s - 1}.$$
\end{proof}

It is known that the spectral radius of a connected graph is an eigenvalue of the graph with algebraic multiplicity $1$.
From the Perron-Frobenius Theorem for tensors,
the spectral radius of a connected uniform hypergraph is also one of its eigenvalues \cite{chang2008perron}.
However, its algebraic multiplicity remains unknown in general cases.
Currently, the algebraic multiplicities of the spectral radius have been determined for some classes of hypergraphs,
including $k$-uniform power hypergraphs \cite{chen2024spectra}, $k$-uniform hypertrees \cite{chen2024multiplicity} and $3$-uniform complete hypergraphs \cite{zheng2021complete}.
We derive the following result from Theorem \ref{mainresult},
which confirms Fan's conjecture \citep[Conjecture 5.5]{fan2024themultiplicity} in the case of sunflowers.

\begin{cor}
The spectral radius of the sunflower $\mathcal{S}(k,s,p)$ is $\sqrt[k]{p^s}$ and its algebraic multiplicity is $k^{p(k-s)+s-1-p}$.
\end{cor}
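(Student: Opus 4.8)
The plan is to read off both assertions directly from the factored characteristic polynomial in Theorem \ref{mainresult}, after first understanding the arithmetic of the numbers $\mathbf{e}_p^\top\xi$ as $\xi$ ranges over $\Xi^p$. The basic observation is that $\xi_i^{s+1}-\xi_i=0$ forces each coordinate $\xi_i$ to be either $0$ or an $s$-th root of unity, so $|\xi_i|\le 1$ for all $i$. The first step is to pin down the spectral radius. By Theorem \ref{dingli1} every eigenvalue $\lambda$ satisfies $\lambda^k=(\mathbf{e}_p^\top\xi)^s$ for an admissible $\xi$, hence $|\lambda|=|\mathbf{e}_p^\top\xi|^{s/k}$. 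Since $|\mathbf{e}_p^\top\xi|=|\sum_i\xi_i|\le\sum_i|\xi_i|\le p$, I obtain $|\lambda|\le p^{s/k}=\sqrt[k]{p^s}$. Equality in the triangle inequality forces the nonzero coordinates of $\xi$ to share a common argument, while $|\xi_i|=1$ forces them all to equal a single $s$-th root of unity $\omega$; thus $|\mathbf{e}_p^\top\xi|=p$ holds exactly when $\xi=\omega\,\mathbf{e}_p$ with $\omega^s=1$. Taking $\xi=\mathbf{e}_p\in\Xi^p$ (which also lies in $\Xi_0^p$, covering the case $s=k-1$) shows that $\lambda=p^{s/k}$ is genuinely an eigenvalue, and by the Perron--Frobenius theorem for tensors \cite{chang2008perron} it is the spectral radius.

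The second step is to compute the algebraic multiplicity of $\lambda=p^{s/k}$. The crux is to identify which factors $\lambda^k-(\mathbf{e}_p^\top\xi)^s$ of $\phi_{\mathcal{S}}$ vanish at $\lambda=p^{s/k}$: this occurs iff $(\mathbf{e}_p^\top\xi)^s=p^s$, i.e.\ $|\mathbf{e}_p^\top\xi|=p$, which by the first step forces $\xi=\omega\,\mathbf{e}_p$ for one of the $s$ roots $\omega^s=1$. Each such $\xi$ has full support $|\mathrm{supp}(\xi)|=p$, and all $s$ of them produce the \emph{same} factor $\lambda^k-p^s$, so their multiplicities $\mu$ add. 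Hence the exponent of $\lambda^k-p^s$ in $\phi_{\mathcal{S}}$ is
\begin{align*}
\sum_{\omega^s=1}\mu(\omega\,\mathbf{e}_p)=s\cdot\frac{1}{s}\,k^{p(k-s-1)+s-1}=k^{p(k-s-1)+s-1}=k^{p(k-s)+s-1-p}.
\end{align*}
Finally, since $p^s\neq 0$, the polynomial $\lambda^k-p^s$ has $k$ distinct roots, so $\lambda=p^{s/k}$ is a simple root of it; and no factor with $(\mathbf{e}_p^\top\xi)^s\neq p^s$ (in particular the factor $\lambda^k$ coming from $\xi=\mathbf{0}$, whose only root is $0\neq p^{s/k}$) vanishes there. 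Therefore the algebraic multiplicity of the spectral radius equals the exponent just computed, namely $k^{p(k-s)+s-1-p}$.

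The main obstacle, modest as it is, lies in the bookkeeping of the second step: one must recognize that the $s$ distinct vectors $\omega\,\mathbf{e}_p$ collapse into a single repeated factor $\lambda^k-p^s$ so that their individual multiplicities sum, and simultaneously verify that $p^{s/k}$ is a \emph{simple} root of that factor, which is what lets the factor's exponent transfer unchanged to the root's multiplicity. A minor point requiring care is the boundary case $s=k-1$, where $\mu(\xi)=0$ for $\xi\notin\Xi_0^p$; but since each $\omega\,\mathbf{e}_p$ has full support it always lies in $\Xi_0^p$, so the computation is unaffected.
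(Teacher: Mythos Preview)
Your proof is correct and follows essentially the same approach as the paper, which simply states the corollary as an immediate consequence of Theorem~\ref{mainresult} without spelling out the details. Your argument is precisely the verification the paper leaves implicit: identifying the $s$ vectors $\xi=\omega\,\mathbf{e}_p$ (with $\omega^s=1$) as the only ones producing the maximal factor $\lambda^k-p^s$, summing their multiplicities $\mu(\xi)=\tfrac{1}{s}k^{p(k-s-1)+s-1}$, and noting that $p^{s/k}$ is a simple root of that factor.
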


\section*{Acknowledgment}

This work is supported by the National Natural Science Foundation of China (No. 12071097, 12371344),
the Natural Science Foundation for The Excellent Youth Scholars of the Heilongjiang Province (No. YQ2022A002, YQ2024A009),
the China Postdoctoral Science Foundation (No. 2024M761510) and the Fundamental Research Funds for the Central Universities.

\bibliographystyle{plain}
\bibliography{sunbib}

\end{document}